\newtheorem{theorem}{Theorem}
\newtheorem{lemma}[theorem]{Lemma}
\newtheorem{remark}[theorem]{Remark}
\newtheorem{proposition}[theorem]{Proposition}
\newcommand{\algebraNomizu}{\mathfrak{S}}
\newcommand{\chr}{\Phi}
\newcommand{\VE}{X}
\newcommand{\ve}{x}
\newcommand{\Cottontensor}{C}
\begin{document}

\title[Homogeneous   Cotton solitons]
{Homogeneous   Cotton solitons}
\author{\quad E. Calvi\~{n}o-Louzao, L. M. Hervella, J. Seoane-Bascoy, R. V\'{a}zquez-Lorenzo}
\address{Department of Geometry and Topology, Faculty of Mathematics,
University of Santiago de Compostela, 15782 Santiago de Compostela, Spain}
\email{estebcl@edu.xunta.es, luismaria.hervella@usc.es,javier.seoane@usc.es,
ravazlor@edu.xunta.es}
\thanks{Supported by projects MTM2009-07756 and INCITE09 207 151 PR (Spain)}
\subjclass{53C50, 53B30}
\date{}
\keywords{Homogeneous spaces, Lie groups, Cotton solitons.}

\begin{abstract} Left-invariant Cotton solitons on   homogeneous manifolds
are determined. Moreover,  algebraic Cotton solitons are studied providing  examples of non-invariant Cotton solitons,   both in the
Riemannian and Lorentzian  homogeneous settings.
\end{abstract}

\maketitle

\section{Introduction}\label{se:1}

The objective of the different geometric evolution equations  is to improve a given metric by
considering a   flow associated to the geometric object under consideration. The Ricci, Yamabe and
mean curvature flows are examples extensively studied in the literature. Under suitable conditions
the Ricci flow evolves an initial metric to an Einstein metric while the Yamabe flow evolves an
initial metric to a new one with constant scalar curvature within the same conformal class. Cotton, Yamabe and Ricci flows have many physical  applications (we refer to
\cite{Hall-CQG,Hall-Capocci99,Lashkari-Maloney10} and the references therein for more information). There
are however certain metrics which, instead of evolving by the flow, remain invariant. Such is the
case of those solitons   associated to self-similar solutions of the flow.

In the study of conformal geometry in dimensions greater than  three the Weyl tensor plays a
distinguished role, since its nullity characterizes local conformal flatness. The three-dimensional
case must be studied in a different way due to the fact that the Weyl tensor vanishes identically.
Moreover, the whole curvature is completely determined by the Ricci tensor, $\rho$. Local conformal
flatness is characterized in dimension three by the fact that the Schouten tensor, defined by
$S_{ij}=\rho_{ij}-\frac{\tau}{4}g_{ij}$ where $\tau$ denotes the scalar curvature,  is a Codazzi
tensor, or  equivalently  the Cotton tensor $\Cottontensor_{ijk}=(\nabla_i S)_{jk}-(\nabla_j
S)_{ik}$, which is the unique conformal invariant in dimension three, vanishes. We refer to
\cite{BJM, K, SFR} and the references therein for more information on the usefulness of the Cotton
tensor in describing the geometry of three-dimensional manifolds.

 The Cotton tensor appears  naturally in many physical contexts \cite{Ferreiro10,Garcia-Hehl-Heinicke04}, specially in Chern-Simons theory
\cite{Guven07} or topologically massive gravity \cite{Aliev-Nutku96,Chow-Pope-Sezgin10,Lashkari-Maloney10}. In particular, field equation in topologically massive gravity implies a proportionality between the Einstein and Cotton tensors. The fact that the Einstein tensor consists of second order derivatives on the metric whereas the Cotton tensor is of order three implies that an exact solution of this field equation is difficult to find in general. Indeed, most of the solutions for the field equation in topological massive gravity are constructed on homogeneous spaces.

In \cite{Ali-Ozgur08}  a new geometric flow  based on the conformally invariant Cotton tensor was
introduced. A Cotton flow is a one-parameter family $g(t)$ of three-dimensional metrics satisfying
\begin{equation}\label{Cotton flow}
    \frac{\partial}{\partial t}\,g(t)=-\lambda\, \Cottontensor_{g(t)},
\end{equation}
where $\Cottontensor_{g(t)}$ is the $(0,2)$-Cotton tensor  corresponding to the metric $g(t)$,
obtained from the $(0,3)$-Cotton tensor by means of the $\star$-Hodge operator, and given by
\[
\Cottontensor_{ij}=\frac{1}{2\sqrt{g}}\Cottontensor_{nmi}\epsilon^{nm\ell}g_{\ell j},
\]
where $\epsilon^{ijk}$ denotes   the Levi-Civita permutation symbol ($\epsilon^{123}=1$).

When comparing with the Yamabe flow, the Cotton flow has in some sense an opposite behaviour since
the Cotton flow changes the conformal class except in  the conformally flat case. The genuine fixed points  of the Cotton flow are the locally conformally flat metrics. However
there exist  other geometric fixed points, which correspond to Cotton solitons. A pseudo-Riemannian
manifold $(M,g)$ is a \emph{Cotton soliton} if it admits a vector field $\VE$ such that
\begin{equation}\label{Cotton equation}
    \mathcal{L}_\VE g+\Cottontensor=\lambda g,
\end{equation}
where $\mathcal{L}_\VE$ denotes the Lie derivative in the direction of the vector field $\VE$ and
$\lambda$ is a real number. A Cotton soliton is said to be \emph{shrinking, steady} or
\emph{expanding}, respectively, if $\lambda>0$, $\lambda = 0$ or $\lambda < 0$. Since there is no
ambiguity, we call Cotton soliton both to  the pseudo-Riemannian manifold $(M,g)$  and to the
vector field $\VE$. 

Cotton solitons are closely related to Ricci and Yamabe solitons,  which are defined by
$\mathcal{L}_\VE g+\rho=\lambda g$ and $\mathcal{L}_\VE g=(\tau-\lambda) g$, respectively. In particular, if
$(M,g)$ is a locally conformally flat homogeneous manifold then  the class of Cotton solitons
coincides with the class of Yamabe solitons (see for example
\cite{Calvino-Seoane-Vazquez-Vazquez}). In such a case  the Cotton soliton is said to be
 \emph{trivial}. On the other hand, if $(M,g)$ is a Lorentzian manifold which satisfies the field
equation of a Topologically Massive Gravity space then $(M,g)$ is a Ricci soliton if and only if it
is a  Cotton soliton \cite{Lashkari-Maloney10}.

In Riemannian signature, any  compact Cotton soliton is locally conformally flat, while compact
Lorentzian examples are available in the non locally conformally flat setting \cite{CL-GR-VL}.
Moreover, the fact that any   left-invariant homogeneous Ricci or Yamabe soliton on a
three-dimensional Riemannian Lie group is flat, while non-flat examples exist in the Lorentzian
signature (see \cite{Brozos-Calvaruso-Garcia-Gavino, Calvino-Seoane-Vazquez-Vazquez}), motivates a
study of homogeneous Cotton solitons in the Lorentzian setting.

Complete and simply connected three-dimensional  Lorentzian homogeneous manifolds are either
symmetric or a Lie group with a left-invariant Lorentzian metric \cite{Calvaruso07b}. Since
three-dimensional locally symmetric Lorentzian manifolds are locally conformally flat,  the purpose  of this paper is twofold. Firstly, to classify invariant homogeneous Cotton solitons on Lie
groups. Secondly, to determine algebraic Cotton solitons on Lie groups and  use them in order to obtain new examples of non-invariant Cotton solitons on homogeneous manifolds, both in the Riemannian and  Lorentzian settings.

%
%
%
%
%
%
%
%
%

\subsection*{Conventions and structure  of the paper.}
Throughout this paper, $(M,g)$ denotes a three-dimensional pseudo-Riemannian manifold and $(G,g)$
denotes  a three-dimensional Lie group equipped with a left-invariant metric; as usual,  $R$ stands
for the curvature tensor taken with the sign convention
$R(X,Y)=\nabla_{[X,Y]}-[\nabla_X,\nabla_Y]$, where $\nabla$ denotes the Levi-Civita connection. The
Ricci tensor, $\rho$, and the corresponding Ricci operator, $\widehat{\rho}$, are given by
$\rho(X,Y)=g( \widehat{\rho}(X),Y)$ $=$ $\operatorname{trace}\{Z \mapsto R(X,Z)Y\}$, and we denote
by $\tau$ the scalar curvature. Finally, $(G,g)$ is always assumed to be connected and simply
connected.

We organize this paper as follows. In Section \ref{se:preliminaries}   we review the description of
all three-dimensional   Lorentzian Lie algebras. We analyze the existence of non-trivial
left-invariant Cotton solitons on three-dimensional Lorentzian Lie groups in Section \ref{sect:left
invariant}.  In Section \ref{sect:algebraic Cotton} we determine algebraic Cotton solitons on homogeneous manifolds, showing the existence of Cotton solitons which are non-invariant. Finally,  in Section \ref{sect:non-invariant Lorentzian} examples of non-invariant shrinking and
expanding  Lorentzian homogeneous Cotton solitons which are non-trivial are constructed on the
Heisenberg group and on the  $E(1,1)$ group.

\section{Preliminaries}\label{se:preliminaries}

Let $\times$ denote the Lorentzian vector product on $\mathbb{R}^3_1$ induced by the product of the
para-quaternions (i.e., $e_1\times e_2=-e_3$, $e_2\times e_3=e_1$, $e_3\times e_1=e_2$, where $\{
e_1,e_2,e_3\}$ is an orthonormal basis of signature $(++-)$). Then $[Z,Y]=L(Z\times Y)$ defines a
Lie algebra, which is unimodular if and only if $L$ is a self-adjoint endomorphism of
$\mathfrak{g}$ \cite{Rahmani92}. Considering the different Jordan normal forms of $L$, we have the
following four classes of unimodular three-dimensional Lorentzian Lie algebras.

\medskip

\noindent{\underline{\emph{Type Ia}}.} If $L$ is diagonalizable with eigenvalues
$\{\alpha,\beta,\gamma\}$ with respect to an orthonormal basis $\{e_1,e_2,e_3\}$ of signature
$(++-)$, then the corresponding Lie algebra is given by

\medskip
\noindent
 $\noindent
(\mathfrak{g}_{Ia})\quad [e_1,e_2]=-\gamma e_3,\quad [e_1,e_3]=-\beta e_2,\quad [e_2,e_3]=\alpha
e_1. $

\bigskip

\noindent{\underline{\emph{Type Ib}}.} Assume $L$ has a complex eigenvalue. Then, with respect to
an orthonormal basis $\{e_1,e_2,e_3\}$ of signature $(++-)$, one has
\[
L=\left(\begin{array}{ccc}
\alpha &0&0\\
0&\gamma&-\beta\\
0 &\beta &\gamma
\end{array}\right), \qquad \beta\neq 0
\]
and thus the corresponding Lie algebra is given by

\medskip
\noindent
 $\noindent
\text{($\mathfrak{g}_{Ib}$)}\quad
 [e_1,e_2]=\beta e_2-\gamma e_3,\quad
[e_1,e_3]=-\gamma e_2-\beta e_3,\quad [e_2,e_3]=\alpha e_1. $

\bigskip

\noindent{\underline{\emph{Type II}}.} Assume $L$ has a double root of its minimal polynomial.
Then, with respect to an orthonormal basis $\{e_1,e_2,e_3\}$ of signature $(++-)$, one has
\[
L=\left(\begin{array}{ccc}
\alpha & 0&0\\
     0  &\frac{1}{2}+\beta&-\frac{1}{2}\\
     0  & \frac{1}{2}&-\frac{1}{2}+\beta
\end{array}\right)
\]
and thus the corresponding Lie algebra is given by

\medskip
\noindent
 $\noindent
(\mathfrak{g}_{II})\quad
 [e_1,e_2]=\frac{1}{2}e_2-(\beta-\frac{1}{2}) e_3,\quad
[e_1,e_3]=-(\beta+\frac{1}{2}) e_2-\frac{1}{2} e_3,\quad [e_2,e_3]=\alpha e_1. $

\bigskip

\noindent{\underline{\emph{Type III}}.} Assume $L$ has a triple root of its minimal polynomial.
Then, with respect to an orthonormal basis $\{e_1,e_2,e_3\}$ of signature $(++-)$, one has
\[
L=\left(\begin{array}{ccc}
\alpha &\frac{1}{\sqrt{2}} &\frac{1}{\sqrt{2}}\\
    \frac{1}{\sqrt{2}}   &\alpha&0\\
    -\frac{1}{\sqrt{2}}   & 0&\alpha
\end{array}\right)
\]
and thus the corresponding Lie algebra is given by

\medskip
\noindent
 $\noindent
(\mathfrak{g}_{III})\quad
 [e_1,e_2]=-\frac{1}{\sqrt{2}}e_1-\alpha e_3,\,
 [e_1,e_3]=-\frac{1}{\sqrt{2}} e_1-\alpha e_2,\,
 [e_2,e_3]=\alpha e_1+\frac{1}{\sqrt{2}}(e_2- e_3).
$

\bigskip

Next we treat the non-unimodular case. First of all, recall that a solvable Lie algebra
$\mathfrak{g}$ belongs to the special class $\algebraNomizu$ if $[x,y]$ is a linear combination of
$x$ and $y$  for any pair of elements in $\mathfrak{g}$. Any left-invariant metric on
$\algebraNomizu$ is of constant sectional curvature \cite{Milnor,nomizu} and hence locally conformally flat. Now, consider the
unimodular kernel, $\mathfrak{u}=\ker (\operatorname{trace}\, ad:\mathfrak{g}\rightarrow
\mathbb{R})$. It follows from \cite{Cordero-Parker97}  that non-unimodular Lorentzian Lie algebras
not belonging to class $\algebraNomizu$ are given, with respect to a suitable basis $\{ e_1,e_2,e_3\}$,
by

\medskip
\noindent
 $
(\mathfrak{g}_{IV})\quad
 [e_1,e_2]\!=\!0{,}\quad
[e_1,e_3]\!=\!\alpha e_1 + \beta e_2,\quad [e_2,e_3]\!=\! \gamma e_1 + \delta e_2{,} $ \quad
$\alpha+\delta\neq 0{,}$

\medskip
\noindent where one of the following holds:
\begin{enumerate}
\item[IV.1] $\{ e_1,e_2,e_3\}$ is orthonormal with  $g( e_1,e_1)=-g( e_2, e_2)=-g( e_3,e_3)=-1$
    and the structure constants satisfy $\alpha\gamma-\beta\delta=0$.
\item[IV.2] $\{ e_1,e_2,e_3\}$ is orthonormal with $g(e_1,e_1)=g( e_2, e_2)=-g( e_3,e_3)=1$ and
    the structure constants satisfy $\alpha\gamma+\beta\delta=0$.
\item[IV.3] $\{ e_1,e_2,e_3\}$ is pseudo-orthonormal with $g(e_1,e_1)=-g(e_2,e_3) = 1$ and the
    structure constants satisfy $\alpha\gamma=0$.
\end{enumerate}

\medskip

As a matter of notation, henceforth we will write
\[
   \nabla_{e_i}e_j = \sum_{k} \chr_{ij}^k e_k
\]
to represent the Levi-Civita connection corresponding to the left-invariant metric on the Lie
group, where $\{e_1, e_2, e_3\}$ denotes the basis fixed in  each case.  Moreover, we will denote
by  $\VE=\sum \ve_i e_i=(\ve_1,\ve_2,\ve_3)$  a generic vector field expressed in the corresponding
basis.

Three-dimensional locally conformally flat Lorentzian Lie groups have been studied by Calvaruso in
\cite{Calvaruso07}. We translate his classification to our context, in order to fit the notation
used throughout this paper.

\begin{lemma}\label{le: Lie groups with W=0}
A three-dimensional Lorentzian Lie group $(G,g)$ is locally conformally flat if and only if one of
the following  holds:
\begin{enumerate}
\item[(i)] $(G,g)$ is locally symmetric and
    \begin{itemize}
    \item[(i.a)] of Type Ia with $\alpha$ $=$ $\beta$  $=$ $\gamma$ or any cyclic
        permutation of $\alpha=\beta$, $\gamma=0$ (in any of these cases $(G,g)$ is of
        constant sectional curvature), or

    \item[(i.b)] of Type II with $\alpha=\beta=0$, and hence flat, or

    \item[(i.c)] of Type IV.1 with constant sectional curvature, or otherwise $\alpha$ $=$
        $\beta$ $=$ $\gamma$ $=$ $0$ and $\delta\neq 0$, or $\beta$ $=$ $\gamma$ $=$
        $\delta$ $=$ $0$ and $\alpha\neq 0$, or

    \item[(i.d)] of Type IV.2 with constant sectional curvature, or otherwise $\alpha$ $=$
        $\beta$ $=$ $\gamma$ $=$ $0$ and $\delta\neq 0$, or $\beta$ $=$ $\gamma$ $=$
        $\delta$ $=$ $0$ and $\alpha\neq 0$, or

    \item[(i.e)] of Type IV.3 and flat, or otherwise $\gamma$ $=$ $\delta$ $=$  $0$ and
        $\alpha\neq 0$,   or

    \item[(i.f)] of Type $\algebraNomizu$ and therefore of constant sectional curvature.
    \end{itemize}

\smallskip

\item[(ii)] $(G,g)$ is not locally symmetric and
    \begin{itemize}
    \item[(ii.a)] of Type Ib with $\alpha=-2\gamma$ and $\beta=\pm\sqrt{3}\,\gamma$, or
    \item[(ii.b)] of Type III with $\alpha=0$, or
    \item[(ii.c)] of Type IV.3 with $\gamma=0$ and $\alpha\delta(\alpha-\delta)\neq 0$.
    \end{itemize}
\end{enumerate}

\end{lemma}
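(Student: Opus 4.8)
The plan is to translate Calvaruso's classification of three-dimensional locally conformally flat Lorentzian Lie groups from \cite{Calvaruso07} into the notation fixed in Section \ref{se:preliminaries}, and to verify case by case that the translated statement is correct. Since in dimension three local conformal flatness is equivalent to the vanishing of the Cotton tensor $\Cottontensor_{ijk}=(\nabla_i S)_{jk}-(\nabla_j S)_{ik}$ (equivalently, to the Schouten tensor being Codazzi), the strategy is purely computational: for each of the algebra types $\mathfrak{g}_{Ia}$, $\mathfrak{g}_{Ib}$, $\mathfrak{g}_{II}$, $\mathfrak{g}_{III}$, $\mathfrak{g}_{IV}$ (in its three subcases IV.1, IV.2, IV.3) and $\algebraNomizu$, compute the Levi-Civita connection coefficients $\chr_{ij}^k$ from Koszul's formula, then the Ricci operator $\widehat{\rho}$, the scalar curvature $\tau$, the Schouten tensor $S_{ij}=\rho_{ij}-\tfrac{\tau}{4}g_{ij}$, and finally the components of $\Cottontensor$; one then solves $\Cottontensor\equiv 0$ in the structure constants.

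First I would dispatch the easy pieces. Type $\algebraNomizu$ is immediate: any left-invariant metric on $\algebraNomizu$ has constant sectional curvature by \cite{Milnor,nomizu}, hence is locally conformally flat, giving (i.f). For the remaining types I would organize the work by first isolating the locally symmetric ones, since for a locally symmetric three-manifold $\nabla\rho=0$ forces $\nabla S=0$, so $\Cottontensor=0$ automatically; thus every locally symmetric Lorentzian Lie group in the list is locally conformally flat, and one only has to identify which choices of structure constants make each type locally symmetric (this is where the conditions in (i.a)--(i.e) come from, read off from $\nabla R=0$). That separates the classification into part (i) and reduces part (ii) to the genuinely non-symmetric solutions of $\Cottontensor=0$.

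For part (ii) the computation is the heart of the matter. In each relevant type one writes down $\Cottontensor_{ijk}$ as explicit polynomial expressions in the structure constants (for Type Ib in $\alpha,\beta,\gamma$; for Type III in $\alpha$; for Type IV.3 in $\alpha,\beta,\gamma,\delta$), imposes that all independent components vanish, and discards the solutions that already appeared as locally symmetric. For Type III one finds $\Cottontensor=0$ forces $\alpha=0$, giving (ii.b). For Type Ib the vanishing conditions reduce, after using $\beta\neq 0$, to the two-parameter family $\alpha=-2\gamma$, $\beta=\pm\sqrt{3}\,\gamma$, giving (ii.a); here one must also check these are not locally symmetric. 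For Type IV.3 (with $\alpha+\delta\neq 0$ and $\alpha\gamma=0$), solving $\Cottontensor=0$ while excluding the symmetric subfamilies of (i.e) leaves the branch $\gamma=0$ with $\alpha\delta(\alpha-\delta)\neq 0$, giving (ii.c); one checks that Types Ia, II and IV.1, IV.2 contribute nothing new beyond their locally symmetric members.

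The main obstacle is bookkeeping rather than conceptual depth: the Cotton tensor involves third derivatives of the metric, so each type produces several nontrivial polynomial components whose common zero locus must be determined carefully, and one must be vigilant about the signature conventions (the pseudo-orthonormal basis in IV.3, and the various sign patterns in IV.1 and IV.2) so that the raising of indices in $S_{ij}$ and in $\Cottontensor$ is done correctly. A secondary subtlety is making sure that the families found in (ii) are genuinely disjoint from the locally symmetric cases in (i) and that no case is double-counted under the cyclic permutations of the eigenvalues in Type Ia; once the algebra is set up consistently, verifying Calvaruso's list against the recomputed Cotton tensor is routine.
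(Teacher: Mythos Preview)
Your plan is sound, but it goes well beyond what the paper actually does. The paper offers no proof of this lemma at all: it is stated as a direct translation of Calvaruso's classification from \cite{Calvaruso07} into the notation of Section~\ref{se:preliminaries}, with no further justification. Your proposal, by contrast, is a self-contained verification via explicit computation of the Cotton tensor type by type, together with the separation into locally symmetric and non-symmetric cases. That is a perfectly valid (and more transparent) route; note, incidentally, that the $(0,2)$-Cotton tensors you would need are precisely the ones computed later in the paper (Equations \eqref{eq:Ia-Cotton tensor}, \eqref{eq:Ib-Cotton tensor}, \eqref{eq:II-Cotton tensor}, \eqref{eq:III-Cotton tensor}, \eqref{eq:IV.1-Cotton tensor}, \eqref{eq:IV.2-Cotton tensor}, \eqref{eq:IV.3-Cotton tensor}), so your verification could be streamlined by pointing to those formulas rather than recomputing the $(0,3)$-tensor from scratch.
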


\medskip

Finally, note that  any two vector fields $X_1$ and $X_2$ satisfying Equation \eqref{Cotton
equation} ($\mathcal{L}_{X_i}g+C=\lambda_ig$, $i=1,2$) differ in a homothetic vector field since
\[
\mathcal{L}_{X_1-X_2}g-(\lambda_1-\lambda_2)g =\mathcal{L}_{X_1}g-\lambda_1g -\mathcal{L}_{X_2}g+\lambda_2g
=0.
\]
Conversely,  adding a homothetic  vector field to any Cotton soliton gives another Cotton soliton.
As a consequence, if a homogeneous Lorentzian manifold admits two distinct Cotton solitons (i.e.,
for  constants $\lambda_1\neq \lambda_2$) then it is locally conformally flat and therefore trivial
(see \cite{Calvino-Seoane-Vazquez-Vazquez,Hall-Capocci99}  for more information on homogeneous manifolds admitting non-Killing homothetic vector fields).

\section{Left-invariant Cotton solitons on Lorentzian Lie groups }\label{sect:left invariant}

Now we consider the existence of left-invariant solutions of  Equation \eqref{Cotton equation} on
the Lie algebras discussed in Section \ref{se:preliminaries}. We completely solve the corresponding
equations, obtaining a complete description of all non-trivial left-invariant Cotton solitons.

Since the Cotton tensor is trace-free, if a Killing vector field $X$ satisfies Equation
\eqref{Cotton equation} then $C=0$ and $(M,g)$ is locally conformally flat. Conversely, if $(M,g)$
is locally conformally flat and homogeneous, then any Cotton soliton is a homothetic vector field
and hence it is Killing or otherwise the Ricci operator is two-step nilpotent and $(M,g,X)$ is also
a Yamabe soliton \cite{Calvino-Seoane-Vazquez-Vazquez}. In the homogeneous setting    a Cotton
soliton is said to be \emph{trivial} if $C=0$. In what follows  we will  focus on the non-trivial
case and therefore  we will use repeatedly  Lemma \ref{le: Lie groups with W=0} to exclude the case
of locally conformally flat Lie groups.

As a consequence of our analysis in this section the following geometric characterization of
Lorentzian Lie groups admitting  invariant Cotton solitons will be obtained.

\begin{theorem}\label{th:cotton-nilpotente}
A   Lorentzian Lie group $(G,g)$ admits a non-trivial left-invariant Cotton soliton if and only if
the Cotton operator is  nilpotent.
\end{theorem}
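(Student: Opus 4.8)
The plan is to exploit the case–by–case classification of three–dimensional Lorentzian Lie algebras recalled in Section~\ref{se:preliminaries} together with Lemma~\ref{le: Lie groups with W=0}. Since the statement is an ``if and only if'', I would organise the argument around the algebraic structure of the Cotton operator $\widehat{C}$ (the $(1,1)$–tensor associated to the symmetric $(0,2)$--Cotton tensor $C$ via the metric). The first observation to record is that, because $C$ is trace--free, nilpotency of $\widehat{C}$ in dimension three is equivalent to $\widehat{C}$ having all eigenvalues zero, i.e. to $\operatorname{trace}(\widehat{C}^2)=\operatorname{trace}(\widehat{C}^3)=0$; moreover a symmetric trace--free operator in Lorentzian signature is nilpotent precisely when it is not diagonalisable over $\mathbb{R}$ and has a single (zero) eigenvalue — this is the structural fact that will make both implications run.

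For the forward implication, suppose $(G,g)$ admits a non--trivial left--invariant Cotton soliton $X$, so that $\mathcal{L}_X g + C = \lambda g$ with $C\neq 0$. Since $X$ is left--invariant, $\mathcal{L}_X g$ is a left--invariant symmetric $(0,2)$--tensor, and taking traces gives $\operatorname{trace}(\mathcal{L}_X g) = 3\lambda$ (as $C$ is trace--free). The key point is that for a left--invariant vector field the operator associated with $\mathcal{L}_X g$ can be written explicitly in terms of the structure constants and the components $x_i$, and one checks that $\mathcal{L}_X g - \lambda g = -C$ forces algebraic relations among the $x_i$ and the structure constants. I would then go type by type ($\mathfrak{g}_{Ia},\mathfrak{g}_{Ib},\mathfrak{g}_{II},\mathfrak{g}_{III}$, and $\mathfrak{g}_{IV.1},\mathfrak{g}_{IV.2},\mathfrak{g}_{IV.3}$), using Lemma~\ref{le: Lie groups with W=0} to discard the locally conformally flat parameter values (where $C=0$), and in each surviving case solve the system; the upshot should be that a solution with $C\neq 0$ exists only when the parameters also make $\operatorname{trace}(\widehat{C}^2)=\operatorname{trace}(\widehat{C}^3)=0$, i.e. $\widehat{C}$ nilpotent. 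Conversely, for the backward implication I would take those parameter values for which $\widehat{C}$ is nilpotent (again read off from the case analysis) and exhibit an explicit left--invariant $X$ and constant $\lambda$ solving \eqref{Cotton equation}; since $\widehat{C}$ nilpotent and non--zero in particular means $C\neq 0$, the soliton produced is non--trivial. Both directions are really two halves of the same computation carried out in each of the seven families.

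The main obstacle is purely computational bookkeeping: one must compute the Levi--Civita connection coefficients $\Phi_{ij}^k$, then the Ricci and Schouten tensors, then covariantly differentiate to get the $(0,3)$--Cotton tensor $C_{ijk}=(\nabla_i S)_{jk}-(\nabla_j S)_{ik}$, contract with the volume form to get the symmetric $(0,2)$--tensor $C$, and finally set up and solve $\mathcal{L}_X g + C = \lambda g$ — all of this for each of the types $\mathfrak{g}_{Ia}$ through $\mathfrak{g}_{IV.3}$, with the non--unimodular cases $\mathfrak{g}_{IV}$ being the most delicate because of the subcases IV.1--IV.3 and the pseudo--orthonormal basis in IV.3. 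The conceptual content — that nilpotency of $\widehat{C}$ is the obstruction — only emerges after these calculations, so the proof is essentially a verification that in every type the system $\mathcal{L}_X g + C = \lambda g$ is solvable in $(X,\lambda)$ exactly on the locus where the eigenvalues of $\widehat{C}$ all vanish. I would therefore present the detailed per--type computations as a sequence of propositions (one per type, or grouped unimodular/non--unimodular), each stating precisely which parameter values admit a non--trivial left--invariant Cotton soliton and exhibiting it, and then assemble Theorem~\ref{th:cotton-nilpotente} by observing that in every case that locus coincides with $\{\widehat{C}\ \text{nilpotent}\}$.
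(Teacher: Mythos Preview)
Your plan is correct and is essentially the paper's own approach: a case--by--case analysis of the seven Lorentzian Lie algebra types, using Lemma~\ref{le: Lie groups with W=0} to discard the locally conformally flat parameter values, solving the system $\mathcal{L}_X g + C = \lambda g$ in each type, and then matching the resulting parameter locus against the locus where $\widehat{C}$ is nilpotent. The only organisational difference is that the paper first carries out the full classification of non--trivial left--invariant Cotton solitons as a sequence of standalone results (Theorems~\ref{theor: Type Ia}--\ref{Lemma soliton IV.3}, yielding exactly the Type~II cases $\alpha=\beta\neq0$, $\alpha=0\neq\beta$ and the Type~III case $\alpha\neq0$) and only afterwards, in a short concluding paragraph, checks that these are precisely the parameter values making $\widehat{C}$ nilpotent; you propose to interleave the two verifications within each type, which is equivalent in content.
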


Previous result remarks the difference between the Riemannian and Lorentzian settings, since any nilpotent self-adjoint operator vanishes identically in the Riemannian category.

\subsection{Unimodular case}\label{sect:unimodular}
In this subsection we consider the existence of left-invariant Cotton solitons on three-dimensional
unimodular Lorentzian Lie groups whose corresponding Lie algebras were introduced in Section
\ref{se:preliminaries}.

\subsubsection{Type Ia}
In this first case the Levi-Civita connection is  determined by
\[
\begin{array}{l}
    \chr_{12}^3=\chr_{13}^2 = \frac{1}{2}(\alpha-\beta-\gamma),
    \\[0.05in]
    \chr_{21}^3 =\chr_{23}^1= \frac{1}{2}(\alpha-\beta+\gamma),
    \\[0.05in]
    \chr_{31}^2=-\chr_{32}^1= \frac{1}{2}(\alpha+\beta-\gamma),
\end{array}
\]
expressions which, for a left-invariant vector field $\VE=\sum \ve_i e_i$, allow  us to calculate
the non-zero terms in the Lie derivative of the metric, $\mathcal{L}_\VE g$,  given by
\begin{equation}\label{eq:Ia-Lie metric}
        (\mathcal{L}_\VE g)_{12}=(\alpha-\beta) \ve_3,
        \quad
        (\mathcal{L}_\VE g)_{13}=(\gamma-\alpha)\ve_2,
        \quad
        (\mathcal{L}_\VE g)_{23}= (\beta-\gamma)\ve_1,
\end{equation}
and also to determine  the non-zero components of the Cotton tensor, $\Cottontensor$,  as
\begin{equation}\label{eq:Ia-Cotton tensor}
\begin{array}{l}
    \Cottontensor_{11}=\phantom{-}
    \frac{1}{2} \left(2 \alpha ^3 - \alpha ^2 (\beta +\gamma )
    - (\beta -\gamma )^2 (\beta +\gamma )\right),
    \\[0.05in]
    \Cottontensor_{22}=\phantom{-}
    \frac{1}{2} \left(2 \beta ^3 - \beta ^2(\alpha+\gamma)
    - (\alpha-\gamma)^2(\alpha+\gamma)\right),
    \\[0.05in]
    \Cottontensor_{33}= -\frac{1}{2} \left(2 \gamma ^3-\gamma ^2 (\alpha +\beta )
    -(\alpha -\beta )^2 (\alpha +\beta )\right).
\end{array}
\end{equation}

\medskip

In this first case we show that left-invariant Cotton solitons are necessarily trivial.

\begin{theorem}\label{theor: Type Ia}
If a Type Ia unimodular Lorentzian Lie group is a left-invariant Cotton soliton then it is
necessarily trivial.
\end{theorem}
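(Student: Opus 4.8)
The plan is to write the Cotton soliton equation \eqref{Cotton equation} componentwise in the orthonormal basis $\{e_1,e_2,e_3\}$ of signature $(++-)$, using the explicit expressions \eqref{eq:Ia-Lie metric} for $\mathcal{L}_\VE g$ and \eqref{eq:Ia-Cotton tensor} for $\Cottontensor$, and then to exploit that the $(0,2)$-Cotton tensor is symmetric and trace-free.

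First I would record two structural facts visible in the displayed formulas: by \eqref{eq:Ia-Lie metric} the diagonal components of $\mathcal{L}_\VE g$ all vanish, whereas by \eqref{eq:Ia-Cotton tensor} the off-diagonal components of $\Cottontensor$ all vanish. Hence the $(12)$, $(13)$, $(23)$ components of \eqref{Cotton equation} reduce to $(\mathcal{L}_\VE g)_{12}=(\mathcal{L}_\VE g)_{13}=(\mathcal{L}_\VE g)_{23}=0$, so that $\VE$ is a Killing vector field, while the $(11)$, $(22)$, $(33)$ components become
\[
\Cottontensor_{11}=\lambda,\qquad \Cottontensor_{22}=\lambda,\qquad \Cottontensor_{33}=-\lambda,
\]
since $g_{11}=g_{22}=-g_{33}=1$.

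Next, taking the $g$-trace of \eqref{Cotton equation} --- equivalently, forming $\operatorname{trace}_g\Cottontensor=\Cottontensor_{11}+\Cottontensor_{22}-\Cottontensor_{33}$ --- and using that $\Cottontensor$ is trace-free (which can also be checked directly by summing the three right-hand sides of \eqref{eq:Ia-Cotton tensor}), I obtain $0=3\lambda$, hence $\lambda=0$. The displayed equations then force $\Cottontensor_{11}=\Cottontensor_{22}=\Cottontensor_{33}=0$, and since the remaining components of $\Cottontensor$ were already zero, $\Cottontensor\equiv 0$; that is, the soliton is \emph{trivial}. Alternatively, once one observes that $\VE$ is Killing the conclusion is immediate from the general remark recalled at the beginning of this section, namely that a Killing solution of \eqref{Cotton equation} forces $\Cottontensor=0$ because the Cotton tensor is trace-free.

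Because all the curvature data needed is already encapsulated in \eqref{eq:Ia-Lie metric} and \eqref{eq:Ia-Cotton tensor}, I do not anticipate a genuine obstacle here; the only point requiring care is the sign bookkeeping imposed by the $(++-)$ signature when passing from the tensorial identity to its trace, and (if one wants the self-contained version) the short algebraic verification that the sum of the three entries in \eqref{eq:Ia-Cotton tensor} cancels.
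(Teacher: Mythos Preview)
Your argument is correct and is in fact cleaner than the paper's. The paper writes out the full system \eqref{Soliton Ia} and then performs a case analysis on the structure constants: from the off-diagonal equations it deduces that a non-zero soliton forces some coincidence among $\alpha,\beta,\gamma$, and then, assuming (say) $\alpha=\beta$, it reduces the diagonal equations to a pair of relations in $\beta,\gamma,\lambda$ whose only solutions make the group locally conformally flat. This identifies, along the way, which Type~Ia groups have vanishing Cotton tensor, but that information is already contained in Lemma~\ref{le: Lie groups with W=0} and is not needed for the theorem.

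Your approach bypasses all of this by exploiting two structural features simultaneously: that $\mathcal{L}_\VE g$ is purely off-diagonal while $\Cottontensor$ and $g$ are purely diagonal in the chosen orthonormal frame. This immediately makes $\VE$ Killing and reduces the diagonal part to $\Cottontensor=\lambda g$, whence the trace-free property of the Cotton tensor (already invoked by the paper at the start of Section~\ref{sect:left invariant}) kills $\lambda$ and hence $\Cottontensor$. What the paper's route buys is an explicit catalogue of the degenerate structure-constant configurations; what your route buys is a two-line proof that does not require solving any polynomial system in $\alpha,\beta,\gamma$.
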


\begin{proof}
From Equations (\ref{eq:Ia-Lie metric}) and (\ref{eq:Ia-Cotton tensor}), a left-invariant vector
field $\VE=(\ve_1,\ve_2,\ve_3)$ is a Cotton soliton  if and only if
\begin{equation}\label{Soliton Ia}
\left\{
\begin{array}{l}
    \ve_1 (\beta -\gamma )
    = \ve_2 (\gamma -\alpha )
    = \ve_3 (\alpha -\beta )=0,
    \\[0.075in]
    \alpha ^2 (\beta +\gamma )+(\beta -\gamma )^2 (\beta +\gamma )-2 \alpha ^3+2 \lambda =0,
    \\[0.065in]
    \beta^2(\alpha+\gamma)+(\alpha-\gamma)^2(\alpha+\gamma)-2\beta^3+2\lambda = 0,
    \\[0.075in]
    \gamma ^2 (\alpha +\beta )+(\alpha -\beta )^2 (\alpha +\beta )-2 \gamma ^3+2\lambda=0.
\end{array}
\right.
\end{equation}

\noindent Note that the first equation in (\ref{Soliton Ia}) implies that the existence of non-zero
Cotton solitons is possible only if $\alpha=\beta$, or $\alpha=\gamma$, or $\beta=\gamma$.  Next
assume that $\alpha=\beta$ (the study of the other two cases is analogous). Under this condition,
again the first equation in (\ref{Soliton Ia}) implies that either $\beta=\gamma$ or
$\ve_1=\ve_2=0$.  In the first case, if $\beta=\gamma$,   the Lie group is of constant sectional
curvature $-\frac{\alpha^2}{4}$. In the second case, if $\ve_1=\ve_2=0$, Equation (\ref{Soliton
Ia}) reduces to
\[
\gamma^2(\beta-\gamma)-2 \lambda=0,\qquad
\gamma^2 (\beta-\gamma )+\lambda=0,
\]
and it follows that if $\beta\neq \gamma$ then necessarily $\gamma=0$ and  the Lie group is flat.
We conclude that, in any case, the Cotton soliton is trivial.
\end{proof}

\subsubsection{Type Ib} The Levi-Civita connection is determined by
\[
\begin{array}{l}
    \chr_{12}^3=\chr_{13}^2 = \frac{1}{2}(\alpha-2\gamma),
    \\[0.05in]
    \chr_{21}^2=-\chr_{22}^1=-\chr_{31}^3=-\chr_{33}^1 = -\beta,
    \\[0.05in]
    \chr_{21}^3=\chr_{23}^1=\chr_{31}^2=-\chr_{32}^1 = \frac{\alpha}{2},
\end{array}
\]
and a straightforward calculation shows that the Lie derivative of the metric is determined by
\begin{equation}\label{eq:Ib-Lie metric}
\begin{array}{l}
 (\mathcal{L}_\VE g)_{12} = \ve_2 \beta + \ve_3 (\alpha -\gamma ),
  \\[0.05in]
 (\mathcal{L}_\VE g)_{13} =  \ve_3\beta + \ve_2 (\gamma -\alpha ),
 \\[0.05in]
 (\mathcal{L}_\VE g)_{22} = (\mathcal{L}_\VE g)_{33} =  -2 \beta  \ve_1,
\end{array}
\end{equation}
while the Cotton tensor is characterized by
\begin{equation}\label{eq:Ib-Cotton tensor}
\begin{array}{l}
    \Cottontensor_{11}=-2\Cottontensor_{22}=2\Cottontensor_{33}=
    \alpha ^3  - \alpha ^2 \gamma + 4 \beta ^2 \gamma,
    \\[0.05in]
    \Cottontensor_{23}=
        \frac{1}{2} \beta  \left(\alpha ^2 +4 \beta ^2-8 \gamma ^2+4 \alpha  \gamma\right).
\end{array}
\end{equation}

\medskip

As in the Type Ia unimodular case, we do not obtain non-trivial left-invariant Cotton solitons in
this second case.

\begin{theorem}\label{theor:Type Ib}
A Type Ib unimodular Lorentzian Lie group does not admit any non-zero  left-invariant Cotton
soliton.
\end{theorem}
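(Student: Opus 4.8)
The plan is to translate the Cotton soliton equation $\mathcal{L}_\VE g + \Cottontensor = \lambda g$ into the explicit system obtained from Equations \eqref{eq:Ib-Lie metric} and \eqref{eq:Ib-Cotton tensor}, exactly as in the proof of Theorem \ref{theor: Type Ia}. Writing $\VE = (\ve_1, \ve_2, \ve_3)$, the diagonal $(1,1)$, $(2,2)$, $(3,3)$ components give
\[
\alpha^3 - \alpha^2\gamma + 4\beta^2\gamma = \lambda, \qquad
-2\beta\ve_1 - \tfrac12(\alpha^3 - \alpha^2\gamma + 4\beta^2\gamma) = \lambda,
\]
where the second equation comes from both the $(2,2)$ and $(3,3)$ slots (consistently, since $(\mathcal{L}_\VE g)_{22} = (\mathcal{L}_\VE g)_{33}$ and $\Cottontensor_{22} = \Cottontensor_{33}$). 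The off-diagonal $(1,2)$ and $(1,3)$ components yield
\[
\ve_2\beta + \ve_3(\alpha - \gamma) = 0, \qquad \ve_3\beta + \ve_2(\gamma - \alpha) = 0,
\]
and the $(2,3)$ component gives $\tfrac12\beta(\alpha^2 + 4\beta^2 - 8\gamma^2 + 4\alpha\gamma) = 0$. The strategy is to show this system forces $\Cottontensor = 0$, i.e. that any left-invariant solution is trivial, and then invoke Lemma \ref{le: Lie groups with W=0} to see that a Type Ib group with $\Cottontensor = 0$ (being locally conformally flat, which for $\beta \neq 0$ happens only in the non-locally-symmetric case (ii.a)) does not genuinely contribute a left-invariant soliton in the non-trivial sense claimed.

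The key step is the case analysis driven by the $(2,3)$ equation. Recall $\beta \neq 0$ in Type Ib, so that equation reduces to
\[
\alpha^2 + 4\alpha\gamma - 8\gamma^2 + 4\beta^2 = 0,
\]
which lets us solve for $\beta^2 = -\tfrac14(\alpha^2 + 4\alpha\gamma - 8\gamma^2)$. Substituting this into $\Cottontensor_{11} = \alpha^3 - \alpha^2\gamma + 4\beta^2\gamma$ collapses it to a polynomial in $\alpha, \gamma$ alone; I expect it to factor nicely (plausibly as a multiple of $(\alpha + 2\gamma)$ times $(\alpha \mp \sqrt{3}\gamma)$-type factors, matching case (ii.a) of Lemma \ref{le: Lie groups with W=0}). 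If $\Cottontensor_{11} \neq 0$ at this stage one combines the two diagonal scalar equations: subtracting them gives $2\beta\ve_1 = -\tfrac32 \Cottontensor_{11}$, fixing $\ve_1$, while the pair of off-diagonal equations for $\ve_2, \ve_3$ has coefficient matrix with determinant $\beta^2 - (\alpha - \gamma)^2$. If this determinant is nonzero then $\ve_2 = \ve_3 = 0$ and everything is forced; if it vanishes, $\beta^2 = (\alpha-\gamma)^2$, which together with the $(2,3)$ relation is an extra algebraic constraint that I would feed back in. In every branch the outcome should be $\alpha^3 - \alpha^2\gamma + 4\beta^2\gamma = 0$, hence $\Cottontensor_{11} = \Cottontensor_{22} = \Cottontensor_{33} = 0$, and with the $(2,3)$ equation already giving $\Cottontensor_{23} = 0$ we conclude $\Cottontensor = 0$.

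Finally I would note that once $\Cottontensor = 0$ the group is locally conformally flat, and by Lemma \ref{le: Lie groups with W=0}(ii.a) the only Type Ib such groups have $\alpha = -2\gamma$, $\beta = \pm\sqrt{3}\,\gamma$; on these the Ricci operator is not two-step nilpotent (they have constant-sign curvature behaviour), so the soliton vector field is a genuine homothety or Killing field rather than a new phenomenon, and in the terminology fixed in Section \ref{sect:left invariant} the soliton is trivial. Since the theorem asserts there is no \emph{non-zero} left-invariant Cotton soliton, I should double-check whether $\VE = 0$ can occur: that needs $\lambda = 0$ and $\Cottontensor = 0$ simultaneously with all components vanishing, which is exactly the locally conformally flat flat/constant-curvature sub-locus — so the statement is really that no soliton vector field with $\Cottontensor \neq 0$ exists, and the phrasing is consistent with Theorem \ref{th:cotton-nilpotente} since the Type Ib Cotton operator is never a nonzero nilpotent. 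The main obstacle I anticipate is purely computational bookkeeping: verifying that the polynomial obtained after eliminating $\beta^2$ using the $(2,3)$ equation indeed vanishes on precisely the locally conformally flat locus, and handling the degenerate subcase $\beta^2 = (\alpha-\gamma)^2$ without missing a branch.
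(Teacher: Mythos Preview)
Your setup has two sign errors that derail the argument. First, you assert that the $(2,2)$ and $(3,3)$ components of the soliton equation coincide because $(\mathcal{L}_\VE g)_{22}=(\mathcal{L}_\VE g)_{33}$ and ``$\Cottontensor_{22}=\Cottontensor_{33}$''. But from \eqref{eq:Ib-Cotton tensor} one has $\Cottontensor_{11}=-2\Cottontensor_{22}=2\Cottontensor_{33}$, so $\Cottontensor_{22}=-\Cottontensor_{33}$, and moreover $g_{33}=-1$. The two equations are therefore
\[
-2\beta\ve_1-\tfrac12\Cottontensor_{11}=\lambda,\qquad -2\beta\ve_1+\tfrac12\Cottontensor_{11}=-\lambda,
\]
and subtracting them gives $\beta\ve_1=0$, hence $\ve_1=0$ immediately. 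Second, the coefficient matrix of the $(\ve_2,\ve_3)$ system has determinant $\beta^2+(\alpha-\gamma)^2$, not $\beta^2-(\alpha-\gamma)^2$; since $\beta\neq 0$ this is always nonzero, forcing $\ve_2=\ve_3=0$ with no degenerate branch to analyze. Once both errors are fixed you land exactly on the paper's proof: the off-diagonal $(1,2)$, $(1,3)$ equations kill $\ve_2,\ve_3$, the pair of $(2,2)$, $(3,3)$ equations kills $\ve_1$, and the theorem follows.

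Because of the first error your reduced system is genuinely underdetermined: with only one equation in place of two, the relations you retain (namely $\Cottontensor_{11}=\lambda$, $2\beta\ve_1=-\tfrac32\Cottontensor_{11}$, $\Cottontensor_{23}=0$, $\ve_2=\ve_3=0$) are perfectly consistent with $\Cottontensor_{11}\neq 0$, so the hoped-for conclusion ``in every branch $\Cottontensor_{11}=0$'' would not materialize. More broadly, the strategy of first proving $\Cottontensor=0$ and then invoking Lemma~\ref{le: Lie groups with W=0} is an unnecessary detour: the statement asserts that no \emph{non-zero} left-invariant $\VE$ satisfies \eqref{Cotton equation}, and the paper proves exactly $\VE=0$ directly, without ever needing the $(1,1)$ or $(2,3)$ components.
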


\begin{proof}
For a left-invariant vector field $\VE=(\ve_1,\ve_2,\ve_3)$, Equations (\ref{eq:Ib-Lie metric}) and
(\ref{eq:Ib-Cotton tensor}) imply that $\VE$ is a Cotton soliton if and only if
\begin{equation}\label{Soliton Ib}
\left\{
\begin{array}{l}
    \ve_2\beta +\ve_3 (\alpha -\gamma )=0,
    \\[0.075in]
    \ve_3\beta + \ve_2 (\gamma -\alpha ) =0,
    \\[0.075in]
    \beta  \left(\alpha ^2+4 \alpha  \gamma +4 \beta ^2-8 \gamma ^2\right)=0,
    \\[0.075in]
    \alpha ^3 - \alpha ^2 \gamma + 4 \beta ^2 \gamma - \lambda=0,
    \\[0.075in]
    \alpha ^3-\alpha ^2 \gamma +4 \beta ^2 \gamma +2 \lambda +4 \beta  \ve_1=0,
    \\[0.075in]
    \alpha ^3 - \alpha ^2 \gamma +4\beta^2\gamma +2\lambda-4\beta \ve_1=0.
\end{array}
\right.
\end{equation}

\noindent Note that $\beta\neq 0$. Hence, the first equation in (\ref{Soliton Ib}) implies that
$x_2=\frac{x_3 (\gamma-\alpha )}{\beta }$ and, as a consequence, the second equation in
(\ref{Soliton Ib}) is equivalent to $x_3 (\beta^2 + (\gamma - \alpha)^2)=0$. Therefore, it follows
that $x_3=0$ and hence $x_2=0$. Finally, from the last two equations in (\ref{Soliton Ib}) one
obtains $x_1=0$, and this ends the proof.
\end{proof}

\subsubsection{Type II}
In this case, the Levi-Civita connection is determined by
\[
\begin{array}{ll}
    \chr_{12}^3=\chr_{13}^2 = \frac{1}{2}(\alpha-2\beta),
    &
    \chr_{21}^2=-\chr_{22}^1=-\chr_{31}^3=-\chr_{33}^1= - \frac{1}{2},
    \\[0.05in]
    \chr_{21}^3 = \chr_{23}^1= \frac{1}{2}(\alpha-1),
    &
    \chr_{31}^2=-\chr_{32}^1= \frac{1}{2}(\alpha+1),
\end{array}
\]
and thus the Lie derivative of the metric is characterized by
\begin{equation}\label{eq:II-Lie metric}
    \begin{array}{l}
        (\mathcal{L}_\VE g)_{12}=\frac{1}{2} (\ve_2 + (2\alpha-2\beta-1)\ve_3),
        \\[0.05in]
        (\mathcal{L}_\VE g)_{13}=\frac{1}{2} (\ve_3 - (2\alpha-2\beta+1)\ve_2 )
        \\[0.05in]
        (\mathcal{L}_\VE g)_{22}=(\mathcal{L}_\VE g)_{33} = -(\mathcal{L}_\VE g)_{23}= -\ve_1.
    \end{array}
\end{equation}
Moreover, the Cotton tensor is determined by
\begin{equation}\label{eq:II-Cotton tensor}
\begin{array}{l}
    \Cottontensor_{11}=\alpha ^2 (\alpha -\beta),
    \\[0.05in]
    \Cottontensor_{22}=-\frac{1}{4} \left(2\alpha ^3 -8 \beta ^2 + 4 \alpha  \beta - \alpha ^2 (2 \beta-1) \right),
    \\[0.05in]
    \Cottontensor_{23}= \frac{1}{4}(\alpha ^2-8\beta ^2+4 \alpha  \beta),
    \\[0.05in]
    \Cottontensor_{33}=\frac{1}{4}
        \left(2 \alpha ^3+8 \beta ^2-4 \alpha  \beta  - \alpha ^2 (2 \beta+1)\right).
\end{array}
\end{equation}

\medskip

Next we determine the left-invariant Cotton solitons in the Type II unimodular case.

\begin{theorem}\label{theor soliton II}
A unimodular Lorentzian Lie group $(G,g)$ of Type II admits non-trivial left-invariant Cotton solitons if
and only if one of the following conditions holds:
\begin{itemize}
    \item[(i)] $\alpha=\beta\neq 0$, and then  $G=O(1,2)$ or $G=SL(2,\mathbb{R})$,
    \item[(ii)] $\alpha=0\neq \beta$,   and then $G=E(1,1)$.
\end{itemize}
Moreover, in these cases, the Cotton solitons are always steady and  respectively given by:
\begin{itemize}
    \item[(i)] $X=\frac{3}{4}\beta^2  e_1+\kappa (e_2+e_3)$, where $\kappa\in\mathbb{R}$.

\smallskip

    \item[(ii)] $X= 2 \beta ^2e_1$.
\end{itemize}
\end{theorem}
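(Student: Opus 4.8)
The plan is to solve the Cotton soliton equation $\mathcal{L}_X g + C = \lambda g$ directly from the explicit formulas \eqref{eq:II-Lie metric} and \eqref{eq:II-Cotton tensor}, exactly as in the proofs of Theorems \ref{theor: Type Ia} and \ref{theor:Type Ib}. First I would write out the six scalar equations obtained by matching the $(i,j)$-components: the off-diagonal equations $(1,2)$ and $(1,3)$ involve only $x_2,x_3$ and the structure constants and have no Cotton contribution (since $C_{12}=C_{13}=0$), while the equations coming from the $(2,3)$, $(1,1)$, $(2,2)$, $(3,3)$ slots bring in $\lambda$, $x_1$, and the components of $C$. Because $(\mathcal{L}_X g)_{22}=(\mathcal{L}_X g)_{33}=-(\mathcal{L}_X g)_{23}=-x_1$, subtracting the $(2,2)$ from the $(3,3)$ equation eliminates both $x_1$ and $\lambda$ and yields $C_{22}-C_{33}=0$, i.e.\ $2\alpha^3 - \alpha^2 = 0$ after simplification; combined with the $(2,3)$-plus-$(2,2)$ relation $C_{22}+C_{23}=0$ this pins down the admissible $(\alpha,\beta)$.

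Carrying out that elimination, $C_{22}-C_{33} = -\tfrac12\alpha^2(2\alpha-1)\cdot(\text{something})$ — more precisely one finds the condition factors as $\alpha^2\,(\,\text{linear in }\alpha,\beta\,)=0$ together with a second relation, forcing either $\alpha=0$ or $\alpha=\beta$ (the remaining possibility $2\alpha-2\beta=1$-type branches must be checked against the off-diagonal equations and the local-conformal-flatness exclusion via Lemma \ref{le: Lie groups with W=0}(i.b), which removes $\alpha=\beta=0$). In the case $\alpha=\beta\neq0$: the off-diagonal equations become $x_2 + (-1)x_3 = 0$ and $x_3 - x_2 = 0$, consistent, giving $x_2=x_3=:\kappa$ free; the $(1,1)$ equation gives $C_{11}=\lambda$, but $C_{11}=\alpha^2(\alpha-\beta)=0$, so $\lambda=0$ (steady); and one of the remaining diagonal equations gives $-x_1 + C_{22} = 0$, whence $x_1 = C_{22} = \tfrac34\beta^2$ after substituting $\alpha=\beta$. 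This reproduces $X=\tfrac34\beta^2 e_1 + \kappa(e_2+e_3)$. In the case $\alpha=0\neq\beta$: the off-diagonal equations force $x_2=x_3=0$, the $(1,1)$ equation gives $\lambda = C_{11}=0$ (steady), and the $(2,2)$ equation gives $x_1 = C_{22} = 2\beta^2$, reproducing $X=2\beta^2 e_1$. To finish, I would identify the underlying simply connected Lie groups from the structure constants of $\mathfrak{g}_{II}$: when $\alpha=\beta\neq0$ the algebra is that of $\widetilde{SL}(2,\mathbb{R})$ (equivalently $O(1,2)$ as a local model), and when $\alpha=0\neq\beta$ the algebra is that of $E(1,1)$, using the standard classification of three-dimensional unimodular Lie algebras.

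The main obstacle I anticipate is the algebra of the case split: the determinant/elimination step produces a polynomial system in $\alpha,\beta$ whose factorization must be handled carefully, and several spurious branches (e.g.\ those making the coefficient of $x_2$ or $x_3$ in the off-diagonal equations vanish, or those landing in the locally conformally flat locus) need to be excluded one at a time — the first by checking the compatibility of the two off-diagonal equations, the second by invoking Lemma \ref{le: Lie groups with W=0}. Once the parameter values are isolated, everything else is a direct back-substitution. The only genuinely external input needed is the identification of the abstract Lie group from its Lie algebra, which is classical. A secondary bookkeeping point worth stating explicitly is that the soliton is automatically \emph{non-trivial} in both surviving cases precisely because $C\neq0$ there (one checks $C_{22}\neq0$), which is what makes these genuine examples rather than instances excluded by Lemma \ref{le: Lie groups with W=0}.
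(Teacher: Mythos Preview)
Your approach is essentially the paper's: write out the six scalar equations from \eqref{eq:II-Lie metric}--\eqref{eq:II-Cotton tensor} and eliminate. The paper organises the case split slightly differently --- it adds the $(1,2)$ and $(1,3)$ equations to obtain $(\alpha-\beta)(x_2-x_3)=0$ and branches there --- but the endgame (back-substitution in each case, exclusion of $\alpha=\beta=0$ via Lemma~\ref{le: Lie groups with W=0}) is identical to yours.

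One concrete slip to fix: subtracting the $(2,2)$ equation from the $(3,3)$ equation does \emph{not} eliminate $\lambda$, because $g_{33}=-1$. You get $C_{22}-C_{33}=2\lambda$, and similarly $C_{22}+C_{23}=\lambda$, not zero. The repair is immediate: combine $C_{22}-C_{33}=2\lambda$ with the $(1,1)$ equation $C_{11}=\lambda$ and the trace-free identity $C_{11}+C_{22}-C_{33}=0$ to get $3\lambda=0$, whence $\lambda=0$ and $\alpha^2(\alpha-\beta)=0$. After this correction your branch analysis ($\alpha=\beta\neq0$ and $\alpha=0\neq\beta$) and the resulting solitons are exactly right.
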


\begin{proof}
Considering Equations (\ref{eq:II-Lie metric}) and (\ref{eq:II-Cotton tensor}), the Cotton soliton
condition  for a left-invariant vector field $\VE=(\ve_1,\ve_2,\ve_3)$ can be expressed as
\begin{equation}\label{Soliton II}
\left\{
\begin{array}{l}
    \ve_2+\ve_3 (2 \alpha -2 \beta -1)=0,
    \\[0.075in]
    \ve_3-\ve_2 (2 \alpha-2 \beta +1)=0,
    \\[0.075in]
    \alpha ^2-8 \beta ^2+4 \alpha  \beta +4 \ve_1 =0,
    \\[0.075in]
    \alpha ^2 (\alpha-\beta )-\lambda=0,
    \\[0.075in]
     2 \alpha^3 - 8 \beta^2 + 4 \alpha \beta   - \alpha^2 (2 \beta-1) +4 \ve_1  +4 \lambda=0,
    \\[0.075in]
    2 \alpha ^3+8 \beta ^2  - 4 \alpha  \beta  - \alpha ^2 (2 \beta +1)- 4 \ve_1 +4 \lambda =0.
\end{array}
\right.
\end{equation}

\noindent  Adding the first and the second equations in (\ref{Soliton II}) we obtain $(\alpha
-\beta) (\ve_2-\ve_3)=0$. Hence, either $\alpha=\beta$ or $\ve_2=\ve_3$. Suppose first that
$\alpha=\beta$; in this case, the fourth equation in (\ref{Soliton II}) implies that $\lambda=0$.
Now, the first equation in (\ref{Soliton II}) leads to $\ve_2=\ve_3$ and finally the last equation
in (\ref{Soliton II}) implies that $\ve_1=\frac{3}{4}\beta^2$. With this set of conditions Equation
(\ref{Soliton II}) holds and the Cotton soliton is non-trivial whenever $\alpha=\beta\neq 0$, which
shows (i).

Next we analyze the case $\ve_2=\ve_3$, with $\alpha\neq \beta$. In this case, the first equation
in (\ref{Soliton II}) implies that $\ve_3(\alpha-\beta)=0$ and hence $\ve_3=0$. On the other hand,
from the fourth equation in (\ref{Soliton II}) we have $\lambda=\alpha^2(\alpha-\beta)$. Moreover,
from the last two equations in (\ref{Soliton II}) one easily obtains   $\alpha^2 (\alpha -\beta
)=0$, and therefore $\alpha=0$. Finally, Equation (\ref{Soliton II}) reduces to $\ve_1=2\beta^2$,
which shows (ii).
\end{proof}

\begin{remark}\rm
Left-invariant Cotton solitons on a unimodular Lorentzian Lie group of Type II are gradient if and
only if $\alpha=0\neq \beta$. To show this, we  analyze the two different cases obtained in Theorem \ref{theor soliton II}. First, assume
that $\alpha=\beta\neq 0$; in this case, left-invariant Cotton solitons are of the form
$X=\frac{3}{4}\beta^2 e_1+\kappa (e_2+e_3)$, $\kappa\in\mathbb{R}$, and hence the dual form $X^b$
is given by
\[
    X^b=\frac{3}{4}\beta^2 e^1+\kappa (e^2- e^3).
\]
A straightforward calculation shows that
\[
    dX^b=-\beta \kappa  (e^1\wedge e^2- e^1\wedge e^3)-{\textstyle \frac{3}{4}} \beta^3 e^2\wedge e^3
\]
and therefore the Cotton soliton is never gradient. Now suppose that $\alpha=0\neq \beta$; in this
second case, the  left-invariant Cotton soliton is given by $X=2\beta ^2e_1$. The dual form is
\[
    X^b=2\beta ^2e^1
\]
and, as a consequence, $dX^b=0$. Thus we conclude that there exists a smooth function $f$ such that
$X=\nabla f$.
\end{remark}

\subsubsection{Type III}
In this last unimodular case, the Levi-Civita connection is  determined by
\[
\begin{array}{l}
    \chr_{11}^2=-\chr_{11}^3=-\chr_{12}^1=-\chr_{13}^1
    = \chr_{22}^3=\chr_{23}^2=\chr_{32}^3=\chr_{33}^2 = \frac{1}{\sqrt{2}},
    \\[0.05in]
    \chr_{12}^3=\chr_{13}^2 =-\chr_{21}^3=-\chr_{23}^1=-\chr_{31}^2
    =\chr_{32}^1 = -\frac{\alpha}{2},
\end{array}
\]
and hence the Lie derivative of the metric is given  by
\begin{equation}\label{eq:III-Lie metric}
\begin{array}{l}
    (\mathcal{L}_\VE g)_{11} = -\sqrt{2} (\ve_2+\ve_3),
    \quad
    (\mathcal{L}_\VE g)_{12} = (\mathcal{L}_\VE g)_{13} =  \frac{\ve_1}{\sqrt{2}},
    \\[0.05in]
    (\mathcal{L}_\VE g)_{22} = \sqrt{2}\, \ve_3,
    \quad
    (\mathcal{L}_\VE g)_{23} = \frac{\ve_3-\ve_2}{\sqrt{2}},
    \quad
    (\mathcal{L}_\VE g)_{33} =  -\sqrt{2} \,\ve_2,
\end{array}
\end{equation}
while the Cotton tensor is characterized by
\begin{equation}\label{eq:III-Cotton tensor}
\begin{array}{l}
    \Cottontensor_{12}=\Cottontensor_{13}=\frac{3 \alpha^2}{2 \sqrt{2}},
    \qquad
    \Cottontensor_{22}=\Cottontensor_{23}=\Cottontensor_{33}=3\alpha.
\end{array}
\end{equation}

\medskip

For this case, we get the following.

\begin{theorem}\label{theor soliton III}
A unimodular Lorentzian Lie group $(G,g)$ of Type III  admits non-trivial left-invariant Cotton solitons if
and only if $\alpha\neq 0$, and then $G=O(1,2)$ or $G=SL(2,\mathbb{R})$. Moreover, in such a   case, the Cotton soliton is steady and given by
$X=-\frac{3 \alpha ^2}{2}e_1+\frac{3 \alpha }{\sqrt{2}} (e_2-e_3)$.
\end{theorem}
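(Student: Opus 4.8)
The plan is to turn the defining equation $\mathcal{L}_X g+C=\lambda g$ for a left-invariant Cotton soliton $X=(x_1,x_2,x_3)$ into a linear system by reading off the $(e_i,e_j)$-components in the orthonormal basis of signature $(++-)$, so that $g_{11}=g_{22}=1$, $g_{33}=-1$ and the remaining components of $g$ vanish. Substituting the components of $\mathcal{L}_X g$ from \eqref{eq:III-Lie metric} and those of the Cotton tensor from \eqref{eq:III-Cotton tensor} (all components of $C$ not displayed there, in particular $C_{11}$, being zero by trace-freeness), one finds that $X$ is a Cotton soliton if and only if
\begin{align*}
&-\sqrt2\,(x_2+x_3)=\lambda,\qquad \tfrac{1}{\sqrt2}x_1+\tfrac{3\alpha^2}{2\sqrt2}=0,\qquad \sqrt2\,x_3+3\alpha=\lambda,\\
&\tfrac{1}{\sqrt2}(x_3-x_2)+3\alpha=0,\qquad -\sqrt2\,x_2+3\alpha=-\lambda.
\end{align*}

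From here the computation is short. The $(1,2)$-component, which equals the $(1,3)$-component, decouples and gives $x_1=-\tfrac{3}{2}\alpha^2$. Next I would eliminate $x_2,x_3$ from the three diagonal equations: adding the $(1,1)$- and $(2,2)$-equations gives $2\lambda=-\sqrt2\,x_2+3\alpha$, whereas the $(3,3)$-equation, where the Lorentzian sign intervenes, reads $-\lambda=-\sqrt2\,x_2+3\alpha$; subtracting forces $3\lambda=0$, so $\lambda=0$ and every left-invariant Cotton soliton in this case is steady. With $\lambda=0$ the $(3,3)$-equation yields $x_2=\tfrac{3\alpha}{\sqrt2}$, and then the $(2,3)$-equation gives $x_3=x_2-3\sqrt2\,\alpha=-\tfrac{3\alpha}{\sqrt2}$; substituting back one checks that the $(1,1)$- and $(2,2)$-equations hold identically, so $X=-\tfrac{3}{2}\alpha^2\,e_1+\tfrac{3\alpha}{\sqrt2}(e_2-e_3)$ is the unique solution.

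To conclude, \eqref{eq:III-Cotton tensor} shows that $C=0$ exactly when $\alpha=0$, which is the locally conformally flat case of Lemma \ref{le: Lie groups with W=0}(ii.b); hence the soliton obtained is non-trivial precisely when $\alpha\neq0$, and then $X\neq0$. Finally, for the identification of the group one checks that for $\alpha\neq0$ the derived algebra of $\mathfrak{g}_{III}$ is the whole algebra, so $\mathfrak{g}_{III}$ is semisimple and isomorphic to $\mathfrak{sl}(2,\mathbb{R})$, giving $G=O(1,2)$ or $G=SL(2,\mathbb{R})$ as in the statement. I do not anticipate any real obstacle: the system is linear and immediately solvable, and the only point needing attention is keeping track of the sign $g_{33}=-1$, which is exactly what produces the relation $3\lambda=0$ forcing steadiness.
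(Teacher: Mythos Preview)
Your proposal is correct and follows essentially the same approach as the paper: write out the six components of $\mathcal{L}_Xg+C=\lambda g$ using \eqref{eq:III-Lie metric} and \eqref{eq:III-Cotton tensor}, obtain a linear system in $(x_1,x_2,x_3,\lambda)$, and solve. Your system agrees with the paper's system \eqref{Soliton III} after clearing denominators, and your elimination (adding the $(1,1)$- and $(2,2)$-equations and comparing with the $(3,3)$-equation to force $\lambda=0$) is just a minor reordering of the paper's steps; you also add a short justification for $C_{11}=0$ and for the identification of the Lie algebra, which the paper leaves implicit.
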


\begin{proof}
For a left-invariant vector field $\VE=(\ve_1,\ve_2,\ve_3)$, Equations (\ref{eq:III-Lie metric})
and (\ref{eq:III-Cotton tensor}) imply that $\VE$ is a Cotton soliton if and only if
\begin{equation}\label{Soliton III}
\left\{
\begin{array}{l}
    2 \ve_1+3 \alpha^2=0,
    \\[0.075in]
     \sqrt{2}\, \ve_2 + \sqrt{2}\, \ve_3 + \lambda=0,
    \\[0.075in]
    3 \sqrt{2}\, \alpha - x_2+ \ve_3=0,
    \\[0.075in]
    3 \alpha  + \sqrt{2}\, \ve_3 - \lambda =0,
    \\[0.075in]
    3 \alpha  -  \sqrt{2}\, \ve_2 + \lambda =0.
\end{array}
\right.
\end{equation}

\noindent If $\alpha=0$, then   it follows from Equation (\ref{Soliton III}) that the Lie group
does not admit any non-zero left-invariant  Cotton soliton. Next suppose that $\alpha\neq 0$. The
first equation in (\ref{Soliton III}) implies that $x_1=-\frac{3\alpha^2}{2}$ and the second
equation in (\ref{Soliton III}) implies that $x_2=-x_3 - \frac{\lambda}{\sqrt{2}}$. Now, from  the
last two equations in (\ref{Soliton III}) we get $\lambda=0$ and $x_3=-\frac{3\alpha}{\sqrt{2}}$,
from where the result follows.
\end{proof}

\begin{remark}\rm
A unimodular Lorentzian Lie group of Type III  does not admit  any left-invariant gradient Cotton
soliton. Indeed, considering the Cotton soliton determined in Theorem \ref{theor soliton III}, the dual form $X^b$
is given by
\[X^b=-\frac{3 \alpha ^2}{2}e^1+\frac{3 \alpha }{\sqrt{2}}(e^2+e^3)\]
and a straightforward calculation shows that
\[dX^b= \frac{3\alpha^2}{2\sqrt{2}}  \left( e^1\wedge e^2
                    + e^1\wedge e^3+ \sqrt{2}\,\alpha  e^2\wedge e^3\right).\]
Hence, the Cotton soliton $X$ is not gradient.
\end{remark}

\subsection{Non-unimodular case}\label{sect:non-unimodular}
In this subsection the existence of left-invariant Cotton solitons on three-dimensional
non-unimodular Lorentzian Lie groups is considered; the corresponding Lie algebras were introduced
in Section  \ref{se:preliminaries}. We exclude the study of  Type $\algebraNomizu$, since any Lie
group of that type is of constant sectional curvature and it does not admit  any non-zero Cotton
soliton. Next we show that in any other non-unimodular case     the  Cotton solitons are trivial.

\subsubsection{Type IV.1}

In this case, the Levi-Civita connection is  determined by
\[
\begin{array}{ll}
    \chr_{11}^3 =\chr_{13}^1=\alpha,
    &
    \chr_{12}^3=-\chr_{13}^2=\chr_{21}^3=\chr_{23}^1=-\frac{\beta-\gamma}{2},
    \\[0.05in]
    \chr_{22}^3=-\chr_{23}^{2}=-\delta,
    &
    \chr_{31}^2=\chr_{32}^1=-\frac{\beta+\gamma}{2},
\end{array}
\]
and thus the Lie derivative of the metric is characterized by
\begin{equation}\label{eq:IV.1-Lie metric}
    \begin{array}{lll}
        (\mathcal{L}_\VE g)_{11} = -2\alpha\ve_3,
        &
        (\mathcal{L}_\VE g)_{12}= (\beta-\gamma) \ve_3,
        &
        (\mathcal{L}_\VE g)_{13}= \alpha \ve_1 + \gamma\ve_2,
        \\[0.05in]
        (\mathcal{L}_\VE g)_{22} = 2\delta \ve_3,
        &
        (\mathcal{L}_\VE g)_{23} = -\beta\ve_1 - \delta \ve_2.
        &
    \end{array}
\end{equation}
Moreover, the Cotton tensor is determined by
\begin{equation}\label{eq:IV.1-Cotton tensor}
\begin{array}{l}
    \Cottontensor_{11}=-\frac{1}{2} \left(\beta ^3-2\gamma ^3-\alpha ^2 \beta +  \beta  \gamma ^2
        +2 \gamma\delta ^2 -\beta\delta ^2    \right),
    \\[0.05in]
    \Cottontensor_{12}= \delta  (\alpha  (\alpha-\delta )-\beta  (\beta -\gamma )),
    \\[0.05in]
    \Cottontensor_{22}=-\frac{1}{2} \left(2 \beta ^3 -\gamma^3  -2 \alpha ^2 \beta
            -\beta ^2 \gamma +\gamma  \delta ^2 + \alpha  \beta  \delta\right),
    \\[0.05in]
    \Cottontensor_{33}=-\frac{1}{2} (\beta +\gamma ) (\alpha +\beta -\gamma -\delta )
        (\alpha -\beta +\gamma -\delta ).
\end{array}
\end{equation}

We show the non-existence of non-trivial left-invariant Cotton solitons in this case in the
following result.

\begin{theorem}\label{theor: Soliton IV.1}
If a Type IV.1 non-unimodular Lorentzian Lie group is a left-invariant Cotton soliton then it is
necessarily trivial.
\end{theorem}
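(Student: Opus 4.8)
The plan is to proceed exactly as in the preceding unimodular cases: translate Equation~\eqref{Cotton equation} into a system of polynomial equations in the components $\ve_1,\ve_2,\ve_3$ of the left-invariant vector field and the soliton constant $\lambda$, then show that every solution forces $\Cottontensor=0$. Combining \eqref{eq:IV.1-Lie metric} and \eqref{eq:IV.1-Cotton tensor}, the soliton condition $\mathcal{L}_\VE g+\Cottontensor=\lambda g$ becomes six scalar equations: the three off-diagonal ones ($(1,2)$, $(1,3)$, $(2,3)$ components) and the three diagonal ones. Because the metric has signature with $g_{11}=g_{22}=-g_{33}=-1$, the diagonal equations read $(\mathcal{L}_\VE g)_{ii}+\Cottontensor_{ii}=\mp\lambda$, and since $\Cottontensor$ is trace-free one can add suitable multiples of the diagonal equations to eliminate $\lambda$ and obtain relations purely among the structure constants $\alpha,\beta,\gamma,\delta$ and the $\ve_i$.

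First I would use the $(1,3)$ and $(2,3)$ components, namely $\alpha\ve_1+\gamma\ve_2+\Cottontensor_{12}=0$ together with $-\beta\ve_1-\delta\ve_2+\Cottontensor_{23}=0$ (note $\Cottontensor_{13}=\Cottontensor_{23}=0$ is \emph{not} assumed — one must read off which Cotton components vanish from \eqref{eq:IV.1-Cotton tensor}, so in fact the $(1,3)$ and $(2,3)$ equations are $\alpha\ve_1+\gamma\ve_2=0$ and $-\beta\ve_1-\delta\ve_2=0$ up to the listed $\Cottontensor$ terms) to solve for $\ve_1,\ve_2$ linearly in terms of the structure constants, treating $\ve_3$ via the $(1,2)$ equation $(\beta-\gamma)\ve_3+\Cottontensor_{12}=0$. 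Then I would substitute into the three diagonal equations. Eliminating $\lambda$ between them yields two polynomial constraints; together with the structure-constant constraint $\alpha\gamma-\beta\delta=0$ from case IV.1 and the non-unimodularity condition $\alpha+\delta\neq0$, this should pin down the parameters to exactly the locally conformally flat families listed in Lemma~\ref{le: Lie groups with W=0}(i.c), on which $\Cottontensor$ vanishes identically. The key algebraic leverage is the constraint $\alpha\gamma=\beta\delta$: it lets one rewrite $\Cottontensor_{12}=\delta(\alpha(\alpha-\delta)-\beta(\beta-\gamma))$ and simplify $\Cottontensor_{33}$, and it should be used early and repeatedly.

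I expect the main obstacle to be a bifurcation of cases according to whether $\beta-\gamma=0$ (which decouples $\ve_3$ from the $(1,2)$ equation and forces $\Cottontensor_{12}=0$) and whether $\beta+\gamma=0$ or $\delta=0$ (which trivialize $\Cottontensor_{33}$ or $\Cottontensor_{12}$ respectively), so the proof will split into a handful of subcases. In the generic subcase $\beta\neq\gamma$, $\delta\neq0$, one gets $\ve_3$ explicitly and the three diagonal equations become an overdetermined polynomial system in $\alpha,\beta,\gamma,\delta$; the work is to show, using $\alpha\gamma=\beta\delta$, that this system has no solution with $\Cottontensor\neq0$. In the degenerate subcases the constraint $\alpha\gamma=\beta\delta$ combined with the vanishing of one $\Cottontensor$ component typically forces one of the structure constants to be zero, landing in the locally conformally flat list. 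In every branch the conclusion is that the relevant $\Cottontensor_{ij}$ all vanish, so the soliton is trivial; this matches the assertion that $(G,g)$ being a non-trivial left-invariant Cotton soliton is equivalent to nilpotency of the Cotton operator, which in these cases it never is.
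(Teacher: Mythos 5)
Your setup is correct and is exactly the paper's: the six scalar equations you describe are the system \eqref{Soliton IV.1} (with $C_{13}=C_{23}=0$, so the $(1,3)$ and $(2,3)$ components give the homogeneous pair $\alpha\ve_1+\gamma\ve_2=0$, $\beta\ve_1+\delta\ve_2=0$, and the $(1,2)$ component gives $\delta(\alpha(\alpha-\delta)-\beta(\beta-\gamma))+(\beta-\gamma)\ve_3=0$), and you correctly flag $\alpha\gamma=\beta\delta$ and $\alpha+\delta\neq0$ as the levers. But the proposal stops exactly where the proof begins. The whole content of the theorem is the claim that this overdetermined polynomial system has no solution outside the locally conformally flat families of Lemma \ref{le: Lie groups with W=0}, and in your write-up that claim is carried by ``should pin down the parameters'' and ``typically forces one of the structure constants to be zero''; no branch is actually closed. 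Moreover, the hardest branch is invisible in your proposed case split (which is organized around $\beta\pm\gamma$ and $\delta$): the natural trichotomy comes instead from the homogeneous pair in $(\ve_1,\ve_2)$, whose determinant, for $\alpha\neq0$ and $\gamma=\beta\delta/\alpha$, equals $\delta(\alpha^2-\beta^2)/\alpha$. Thus either $\alpha^2=\beta^2$ (constant curvature), or $\delta=0$ (hence $\gamma=0$, and the remaining diagonal equations again force $\alpha^2=\beta^2$), or $\ve_1=\ve_2=0$; in this last branch, with $\delta\neq0$, $\alpha^2\neq\beta^2$, $\alpha\neq\delta$, one must solve the $(1,2)$ equation for $\ve_3=-\delta(\alpha^2-\beta^2)/\beta$, substitute back to get $\lambda$, and extract from the two remaining diagonal equations the identity $(\alpha^2+\beta^2)(\alpha^2-\delta^2)=0$, contradicting $\alpha\neq0$ and $\alpha+\delta\neq0$. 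That computation (plus the separate case $\alpha=0$, where $\beta=0$ and $\delta\neq0$) is the proof; without it you have a plausible plan, not a demonstration.

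Two smaller points. The signature for Type IV.1 is $g(e_1,e_1)=-g(e_2,e_2)=-g(e_3,e_3)=-1$, i.e.\ $g_{11}=-1$, $g_{22}=g_{33}=1$, not ``$g_{11}=g_{22}=-g_{33}=-1$'' as you wrote; if you used your signs, the diagonal equations and hence your elimination of $\lambda$ would come out wrong. And the two off-diagonal equations in $\ve_1,\ve_2$ are homogeneous, so ``solving for $\ve_1,\ve_2$ in terms of the structure constants'' is not possible in general --- the real dichotomy is whether their determinant vanishes, which is precisely where the condition $\delta(\alpha^2-\beta^2)=0$ and the main case division of the paper's proof originate.
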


\begin{proof} Throughout the proof recall that $\alpha\gamma-\beta\delta=0$ and
$\alpha+\delta\neq 0$. Equations (\ref{eq:IV.1-Lie metric}) and (\ref{eq:IV.1-Cotton tensor}) imply
that a left-invariant vector field $\VE=(\ve_1,\ve_2,\ve_3)$ is a Cotton soliton if and only if
\begin{equation}\label{Soliton IV.1}
\left\{
\begin{array}{l}
    \alpha  \ve_1+\gamma  \ve_2=0,
    \\[0.075in]
    \beta  \ve_1+\delta  \ve_2=0,
    \\[0.075in]
    (\beta +\gamma ) (\alpha +\beta -\gamma -\delta ) (\alpha -\beta +\gamma -\delta )+2 \lambda=0,
    \\[0.075in]
   \delta  (\alpha  (\alpha-\delta )-\beta  (\beta -\gamma ))+  (\beta -\gamma ) \ve_3=0,
    \\[0.075in]
    \beta ^3-2 \gamma ^3-\alpha ^2 \beta +\beta  \gamma ^2 + 2\delta^2 \gamma-\beta\delta^2
   +4 \alpha  \ve_3-2 \lambda=0,
   \\[0.075in]
    2 \beta ^3 -\gamma ^3-2 \alpha ^2 \beta -\beta ^2 \gamma +\gamma  \delta^2 +\alpha  \beta  \delta
    -4 \delta  \ve_3+2 \lambda =0.
\end{array}
\right.
\end{equation}

\noindent We analyze first the case $\alpha=0$; in this case, $\beta=0$ and $\delta\neq 0$. The
second equation in (\ref{Soliton IV.1}) implies that  $\ve_2=0$. On the other hand,  the fourth
equation in (\ref{Soliton IV.1}) reduces to  $\gamma \ve_3=0$ and therefore either $\gamma=0$ or
$\ve_3=0$. If $\gamma=0$ then the Lie group is locally conformally flat and  hence the Cotton
soliton is trivial. If $\ve_3=0$ and $\gamma\neq 0$ then the last two equations in (\ref{Soliton
IV.1}) imply that $ (\gamma -\delta ) (\gamma +\delta )=0$, which leads to the constancy of the
sectional curvature  equals to $-\frac{\delta^2}{4}$;  therefore the Cotton soliton must be
trivial.

Assume  now that $\alpha\neq 0$; in this case, $\gamma=\frac{\beta \delta}{\alpha}$ and the  first
equation in (\ref{Soliton IV.1}) implies that  $\ve_1=-\frac{\beta \delta \ve_2}{\alpha ^2}$, while
the second equation in (\ref{Soliton IV.1}) reduces to
\begin{equation}\label{eq: Fifth soliton IV.1}
 \left(\alpha^2 - \beta^2\right) \delta \ve_2=0.
\end{equation}
If  $\alpha^2 - \beta^2=0$ then $\alpha=\varepsilon\beta$ and $\gamma=\varepsilon \delta$, where
$\varepsilon^2=1$; under these conditions the Lie group is of constant sectional curvature
$-\frac{1}{4}(\beta+\varepsilon\delta)^2$ and hence the Cotton soliton is trivial.

Next we consider the case $\delta=0$ in Equation (\ref{eq: Fifth soliton IV.1}) and therefore
$\gamma=0$. The fourth equation in (\ref{Soliton IV.1}) is equivalent to $\beta \ve_3=0$. Now, for
$\beta=0$  the Lie group is locally conformally flat and  hence the Cotton soliton is trivial. On
the other hand, if $\ve_3=0$ and $\beta\neq 0$ then Equation (\ref{Soliton IV.1}) reduces to
\[
    \beta^3-\alpha^2\beta-2\lambda = 0,\qquad  \beta^3-\alpha^2\beta+\lambda=0.
\]
Therefore $\alpha^2-\beta^2=0$, which shows that the Lie group is of constant sectional curvature
and  the Cotton soliton is again trivial.

Finally, we analyze the case $\ve_2=0$ in Equation (\ref{eq: Fifth soliton IV.1}); we assume
$\delta\neq 0$ and $\alpha^2-\beta^2\neq 0$ to avoid the previous cases. Note that, in this case,
$\ve_1=0$ and the fourth equation in (\ref{Soliton IV.1}) transforms into
\[
    (\alpha -\delta ) \left(\delta  \left(\alpha ^2-\beta ^2\right)
    +\beta  \ve_3\right)=0.
\]
If $\alpha=\delta$ then Equation (\ref{Soliton IV.1}) reduces to $\lambda=0$ and $\delta\ve_3=0$;
hence $\ve_3=0$ and the Cotton soliton is zero. Therefore, we assume $\alpha\neq \delta$ and we
have $\delta \left(\alpha ^2-\beta ^2\right)    +\beta  \ve_3=0$. Note that $\beta$ must be
non-zero since we are assuming $\delta(\alpha^2-\beta^2)\neq0$. Thus,
$\ve_3=-\frac{\delta(\alpha^2-\beta^2)}{\beta}$ and from the third equation in (\ref{Soliton IV.1})
we have
\[
    \lambda=-\frac{\beta  \left(\alpha^2-\beta ^2\right) (\alpha -\delta)^2 (\alpha
   +\delta )}{2 \alpha^3}
\]
and Equation (\ref{Soliton IV.1}) reduces to
\[
\left\{
\begin{array}{l}
   \alpha^2 \beta ^2-3 \beta ^2 \delta ^2+2 \alpha  \beta ^2 \delta +  4 \alpha ^4 =0,
   \\[0.075in]
    3 \alpha ^2 \beta ^2-\beta^2\delta ^2-2 \alpha  \beta ^2 \delta -  4 \alpha^2\delta^2=0.
   \end{array}
   \right.
\]
Now, it follows that $\left(\alpha ^2+\beta ^2\right) \left(\alpha ^2-\delta ^2\right)=0$, which is
a contradiction, and this ends the proof.
\end{proof}

\subsubsection{Type IV.2}
In this case, the Levi-Civita connection and the Lie derivative of the metric are determined by
\[
\begin{array}{ll}
    \chr_{11}^3 =\chr_{13}^1=\alpha,
    &
    \chr_{12}^3=\chr_{13}^2=\chr_{21}^3=\chr_{23}^1=\frac{\beta+\gamma}{2},
    \\[0.05in]
    \chr_{22}^3=\chr_{23}^{2}=\delta,
    &
    \chr_{31}^2=-\chr_{32}^1=-\frac{\beta-\gamma}{2},
\end{array}
\]
and
\begin{equation}\label{eq:IV.2-Lie metric}
    \begin{array}{lll}
        (\mathcal{L}_\VE g)_{11} = 2\alpha\ve_3,
        &
        (\mathcal{L}_\VE g)_{12}= (\beta+\gamma) \ve_3,
        &
        (\mathcal{L}_\VE g)_{13}= -\alpha \ve_1 - \gamma\ve_2,
        \\[0.05in]
        (\mathcal{L}_\VE g)_{22} = 2\delta \ve_3,
        &
        (\mathcal{L}_\VE g)_{23} = -\beta\ve_1 - \delta \ve_2,
        &
    \end{array}
\end{equation}
respectively. Now, the Cotton tensor is given by
\begin{equation}\label{eq:IV.2-Cotton tensor}
\begin{array}{l}
    \Cottontensor_{11}=\frac{1}{2} \left(\beta ^3+2 \gamma^3+\alpha^2 \beta
    +\beta\gamma^2+2 \gamma\delta^2+\beta\delta^2\right),
    \\[0.05in]
    \Cottontensor_{12}= \delta  \left(\alpha(\alpha-\delta) +\beta  (\beta +\gamma
   )\right),
   \\[0.05in]
    \Cottontensor_{22}=-\frac{1}{2} \left(2 \beta^3+\gamma ^3 + 2 \alpha ^2 \beta
    +\beta^2 \gamma + \gamma  \delta ^2  -\alpha  \beta  \delta \right),
    \\[0.05in]
    \Cottontensor_{33}=-\frac{1}{2} (\beta -\gamma ) \left((\alpha -\delta )^2+(\beta +\gamma)^2\right).
\end{array}
\end{equation}

\medskip

For this case we have the following.

\begin{theorem}\label{theor:Type IV.2}
If a Type IV.2 non-unimodular Lorentzian Lie group is a left-invariant Cotton soliton then it is
necessarily trivial.
\end{theorem}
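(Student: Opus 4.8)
The plan is to follow the same pattern used for Types Ia, Ib, and IV.1: translate the soliton equation \eqref{Cotton equation} into an explicit algebraic system in $(\ve_1,\ve_2,\ve_3,\lambda)$ using the Lie-derivative formulas \eqref{eq:IV.2-Lie metric} and the Cotton tensor components \eqref{eq:IV.2-Cotton tensor}, and then show that every solution forces the structure constants into the locally conformally flat locus described in Lemma~\ref{le: Lie groups with W=0}(i.d) or (ii) — so that $C=0$ and the soliton is trivial. Recall throughout that $\alpha\gamma+\beta\delta=0$ and $\alpha+\delta\neq 0$.

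The explicit system reads: from the off-diagonal $(12)$ and $(23)$ entries, $(\beta+\gamma)\ve_3 + 2\,\Cottontensor_{12}=0$ (wait — more precisely $(\mathcal L_\VE g)_{12}+\Cottontensor_{12}=0$ since $g_{12}=0$) and $-\beta\ve_1-\delta\ve_2+\Cottontensor_{23}=0$, but note $\Cottontensor_{23}$ is absent from \eqref{eq:IV.2-Cotton tensor}, so in fact $\Cottontensor_{23}=0$ and the $(23)$-equation is $\beta\ve_1+\delta\ve_2=0$; from $(13)$, $-\alpha\ve_1-\gamma\ve_2=0$; and the three diagonal equations $2\alpha\ve_3+\Cottontensor_{11}=\lambda$, $2\delta\ve_3+\Cottontensor_{22}=\lambda$, $\Cottontensor_{33}=\lambda$ (since $(\mathcal L_\VE g)_{33}=0$ from the connection coefficients — only $\Phi$'s with a lower index pattern hitting $e_3$ twice would contribute and there are none). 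First I would treat $\alpha=0$ separately: then $\beta\delta=0$, and since $\alpha+\delta\neq 0$ we get $\delta\neq 0$, hence $\beta=0$; the system collapses and the fourth component equation $\Cottontensor_{12}=\delta(\,\alpha(\alpha-\delta)+\beta(\beta+\gamma))=-\delta^2\cdot 0 =0$ automatically, while the diagonal equations reduce to a small system in $\gamma,\delta,\ve_3$ that forces either $\gamma=0$ (locally conformally flat by (i.d)) or $\gamma^2=\delta^2$ (constant curvature, again locally conformally flat). For $\alpha\neq 0$, the $(13)$-equation gives $\ve_1=-\tfrac{\gamma}{\alpha}\ve_2$; substituting into the $(23)$-equation yields $\bigl(\delta-\tfrac{\beta\gamma}{\alpha}\bigr)\ve_2=0$, and using $\beta\delta=-\alpha\gamma$ one rewrites the coefficient as $\tfrac{1}{\alpha}(\alpha\delta-\beta\gamma)\ve_2 = \tfrac{1}{\alpha\delta}(\alpha\delta^2+\beta^2\gamma\cdot?)$ — the precise simplification (analogous to \eqref{eq: Fifth soliton IV.1} in the IV.1 proof) should produce a factor like $(\alpha^2+\beta^2)$ or $(\alpha^2+\gamma^2)$ times something, up to $\delta$ and $\ve_2$.

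The branching then mirrors the IV.1 argument: either $\ve_2=0$ (hence $\ve_1=0$, and the $(12)$-equation plus the three diagonal equations become a system purely in $\alpha,\beta,\gamma,\delta,\ve_3,\lambda$ constrained by $\alpha\gamma+\beta\delta=0$), or the coefficient multiplying $\ve_2$ vanishes, which I expect to pin down a relation forcing constant sectional curvature — note that in the $++$-definite plane spanned by $e_1,e_2$ the relevant quadratic forms $\alpha^2+\beta^2$, $\alpha^2+\gamma^2$, $(\alpha-\delta)^2+(\beta+\gamma)^2$ are positive definite, so their vanishing collapses several constants to zero at once, which is exactly the mechanism that kills the soliton. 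In the $\ve_2=0$ branch I would use $\Cottontensor_{33}=\lambda$ together with the difference of the first two diagonal equations, $2(\alpha-\delta)\ve_3 = \Cottontensor_{22}-\Cottontensor_{11}$, to solve for $\ve_3$ (when $\alpha\neq\delta$) and substitute back, reducing to two polynomial identities in the structure constants that, after using $\alpha\gamma+\beta\delta=0$, should factor through $(\alpha^2+\beta^2)(\alpha^2-\delta^2)=0$ or similar — forcing $\alpha=\pm\delta$ and then constant curvature; the sub-case $\alpha=\delta$ is handled directly, as in Type~IV.1, by showing the remaining equations force $\ve_3=0$ and all Cotton components vanish.

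The main obstacle will be the bookkeeping in the $\alpha\neq 0$, $\ve_2=0$, $\alpha\neq\delta$ branch: substituting the expression for $\ve_3$ into the two surviving diagonal equations gives degree-three polynomials in four constants with one constraint, and one must verify that the resulting ideal is contained in the union of the locally-conformally-flat strata of Lemma~\ref{le: Lie groups with W=0}. I expect the key simplifying identity to be the elimination of $\gamma$ via $\gamma=-\beta\delta/\alpha$ (valid since $\alpha\neq 0$), after which both equations become rational in $\alpha,\beta,\delta$; clearing denominators and taking a suitable combination should yield a positive-definite-type factor (a sum of squares in the Euclidean $(e_1,e_2)$-directions) whose vanishing immediately forces enough constants to zero. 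Once every branch has been shown to land in the locally conformally flat case, Lemma~\ref{le: Lie groups with W=0} gives $C=0$, so the soliton is trivial, completing the proof.
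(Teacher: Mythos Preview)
Your strategy is exactly the paper's: write the six soliton equations, treat $\alpha=0$ separately, then for $\alpha\neq 0$ eliminate $\gamma=-\beta\delta/\alpha$, obtain the analogue of \eqref{IV.2-subcaso}, and branch on its factors. Two concrete corrections are needed. First, since $g_{33}=-1$ in Type~IV.2, the $(33)$-diagonal equation is $\Cottontensor_{33}=-\lambda$, not $+\lambda$; this sign matters in the subsequent eliminations. Second, and more importantly, the coefficient of $\ve_2$ you obtain is $\tfrac{\delta(\alpha^2+\beta^2)}{\alpha^2}$, so its vanishing gives $\delta=0$ (not constant curvature directly): this branch must be handled on its own (then $\gamma=0$, the $(12)$-equation gives $\beta\ve_3=0$, and one checks $\beta=0$, hence locally conformally flat). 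In the main branch $\ve_2=0$, $\delta\neq 0$, $\alpha\neq\delta$, the paper solves for $\ve_3$ from the $(12)$-equation rather than from the difference of the diagonal equations, but either route works; however, the resulting polynomial identity factors as $(\alpha^2-\beta^2)(\alpha^2-\delta^2)=0$, \emph{not} $(\alpha^2+\beta^2)(\alpha^2-\delta^2)=0$. Thus $\alpha=\pm\delta$ is excluded as you say, but the possibility $\alpha=\pm\beta$ survives and must be ruled out by substituting back into the two remaining equations and checking they are inconsistent (the paper does this explicitly). Your sketch would miss that last sub-case.
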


\begin{proof}
Throughout the proof recall that $\alpha\gamma+\beta\delta=0$ and $\alpha+\delta\neq 0$. For a
left-invariant vector field $\VE=(\ve_1,\ve_2,\ve_3)$, Equations (\ref{eq:IV.2-Lie metric}) and
(\ref{eq:IV.2-Cotton tensor}) imply that $X$ is a Cotton soliton if and only if
\begin{equation}\label{Soliton IV.2}
\left\{
\begin{array}{l}
    \alpha  \ve_1+\gamma  \ve_2=0,
    \\[0.075in]
    \beta  \ve_1+\delta  \ve_2=0,
    \\[0.075in]
    (\beta -\gamma ) \left((\alpha -\delta )^2+(\beta +\gamma)^2\right)-2\lambda=0,
    \\[0.075in]
   \delta  \left(\alpha(\alpha-\delta)+\beta  (\beta +\gamma )\right)
   +(\beta +\gamma )   \ve_3 =0,
    \\[0.075in]
    \beta ^3  +  2 \gamma^3  +  \alpha ^2 \beta  +  \beta  \gamma^2
    + 2 \delta^2  \gamma  +  \beta \delta^2   +4  \alpha  \ve_3  -2 \lambda =0,
    \\[0.075in]
    2 \beta^3  +  \gamma^3  +  2 \alpha^2 \beta  +  \beta^2 \gamma + \gamma  \delta^2
    -\alpha  \beta  \delta   -4 \delta  \ve_3   +2 \lambda =0.
\end{array}
\right.
\end{equation}

\noindent Assume first that $\alpha=0$. In this case, $\beta=0$ and $\delta\neq 0$, and from the
third and fifth equations in (\ref{Soliton IV.2}) one obtains $\gamma
\left(\gamma^2+\delta^2\right)=0$. Therefore $\gamma=0$  and the Lie group is locally conformally
flat;   thus the left-invariant  Cotton soliton is trivial.

Next suppose that $\alpha\neq 0$. Hence $\gamma=-\frac{\beta \delta }{\alpha }$ and the first
equation in (\ref{Soliton IV.2}) implies that  $\ve_1=\frac{\beta \delta \ve_2}{\alpha^2}$. Now,
the second equation in (\ref{Soliton IV.2}) is equivalent to
\begin{equation}\label{IV.2-subcaso}
     \left(\alpha ^2+\beta ^2\right) \delta  x_2=0,
\end{equation}
and therefore either $\delta=0$ or $\ve_2=0$.  If $\delta=0$ then the fourth equation in
(\ref{Soliton IV.2}) reduces to $\beta \ve_3=0$. Note that if  $\ve_3=0$ then Equation
(\ref{Soliton IV.2}) reduces to
\[
    \beta^3+\alpha^2\beta+\lambda = 0,\qquad \beta^3+\alpha^2\beta -2\lambda=0,
\]
which implies that $\beta=0$. Hence, in any case, necessarily $\beta =0$, and it follows that the
Lie group is locally conformally flat. Thus the Cotton soliton is trivial.

Finally, we analyze the case $\ve_2=0$ in Equation (\ref{IV.2-subcaso}); we assume $\delta\neq 0$
to avoid the previous case. Note that, in this case, $\ve_1=0$ and  the fourth equation in
(\ref{Soliton IV.2}) is equivalent to
\[
    (\alpha - \delta) (\delta (\alpha^2 + \beta^2) +\beta\ve_3 )=0.
\]
If $\alpha=\delta$ then  Equation (\ref{Soliton IV.2}) reduces to $\lambda=0$ and $\delta\ve_3=0$;
hence $\ve_3=0$ and the Cotton soliton is zero. Now, if $\alpha\neq \delta$ then $\delta (\alpha^2
+ \beta^2) +\beta\ve_3=0$. Note that  $\beta$ must be non-zero since we are assuming that
$\delta\neq 0$. Thus, $\ve_3=-\frac{\delta \left(\alpha ^2+\beta ^2\right)}{\beta }$ and the third
equation in (\ref{Soliton IV.2}) implies that
\[
    \lambda=\frac{\beta  \left(\alpha ^2+\beta ^2\right)
    (\alpha -\delta )^2 (\alpha +\delta)}{2 \alpha ^3}.
\]
Now, Equation (\ref{Soliton IV.2}) reduces to
\[
\left\{
\begin{array}{l}
    4 \alpha ^4-\alpha ^2 \beta ^2-2 \alpha  \beta ^2 \delta +3 \beta^2 \delta ^2=0,
    \\[0.05in]
   \alpha ^2 \left(3 \beta ^2+4 \delta ^2\right)-2 \alpha  \beta ^2 \delta -\beta ^2 \delta ^2=0,
\end{array}
\right.
\]
and it follows that  $(\alpha -\beta ) (\alpha +\beta ) (\alpha -\delta ) (\alpha +\delta )=0$.
Since we are assuming that $\alpha\neq \delta$ and, moreover, $ \alpha+\delta\neq 0$, then we have
$\alpha=\pm\beta$ and in such a case the above system of equations has no solution (since
$\delta\neq 0$). This ends the proof.
\end{proof}

\subsubsection{Type IV.3}
In this case, the Levi-Civita connection is determined by
\[
\begin{array}{ll}
    \chr_{11}^2 = \chr_{13}^1= \alpha,
    &
    \chr_{12}^2=-\chr_{13}^3=\chr_{21}^2=\chr_{23}^1=
    -\chr_{31}^3=-\chr_{32}^1= \frac{\gamma}{2},
    \\[0.05in]
   \chr_{31}^2= \chr_{33}^1= -\beta,
    &
    \chr_{32}^2=-\chr_{33}^3= -\delta.
\end{array}
\]
Hence, the Lie derivative of the metric and the Cotton tensor are characterized by
\begin{equation}\label{eq:IV.3-Lie metric}
    \begin{array}{l}
        (\mathcal{L}_\VE g)_{11} = 2\alpha\ve_3,
        \qquad
        (\mathcal{L}_\VE g)_{12}= \gamma \ve_3,
        \qquad
        (\mathcal{L}_\VE g)_{13}= -\alpha \ve_1 - \gamma\ve_2-\beta\ve_3,
        \\[0.05in]
        (\mathcal{L}_\VE g)_{23} = -\delta \ve_3,
        \qquad
        (\mathcal{L}_\VE g)_{33} = 2(\beta \ve_1 + \delta \ve_2),
    \end{array}
\end{equation}
and
\begin{equation}\label{eq:IV.3-Cotton tensor}
\begin{array}{l}
    \Cottontensor_{11}=\gamma^3,
    \qquad
    \Cottontensor_{23}=\frac{\gamma^3}{2},
    \qquad
    \Cottontensor_{33}=\frac{\beta\gamma^2}{2}.
\end{array}
\end{equation}

\medskip

Next we show that left-invariant Cotton solitons reduce to left-invariant Yamabe solitons in this
case.

\begin{theorem}\label{Lemma soliton IV.3}
If a Type IV.3 non-unimodular Lorentzian Lie group is a left-invariant Cotton soliton then it is
necessarily trivial.
\end{theorem}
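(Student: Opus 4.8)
The plan is to follow the same pattern used for Types IV.1 and IV.2: write out the Cotton soliton equations explicitly from \eqref{eq:IV.3-Lie metric} and \eqref{eq:IV.3-Cotton tensor}, recalling the Type IV.3 constraints $\alpha\gamma=0$ and $\alpha+\delta\neq 0$, and then show that in every branch either $\gamma=0$ (so the Cotton tensor vanishes by \eqref{eq:IV.3-Cotton tensor} and the soliton is trivial) or the remaining equations force a contradiction. Concretely, a left-invariant vector field $X=(x_1,x_2,x_3)$ is a Cotton soliton if and only if
\begin{equation*}
\left\{
\begin{array}{l}
\alpha x_3 = 0, \qquad \gamma x_3 = 0, \qquad \beta x_1 + \delta x_2 = 0,\\[0.05in]
-2\alpha x_1 - 2\gamma x_2 - 2\beta x_3 + \gamma^3 = \lambda,\\[0.05in]
-\delta x_3 + \tfrac{\gamma^3}{2} = 0,\\[0.05in]
2(\beta x_1 + \delta x_2) + \tfrac{\beta\gamma^2}{2} = \lambda,
\end{array}
\right.
\end{equation*}
where I have also used that the only nonzero components of $\mathcal{L}_X g$ and of $C$ are those listed, and that the $g_{11}$, $g_{12}$, $g_{13}$, $g_{22}$ entries of $C$ vanish (this is what produces the three equations $\alpha x_3=0$, $\gamma x_3=0$, $\beta x_1+\delta x_2=0$ and puts $C_{22}=0$). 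One must be slightly careful about the pseudo-orthonormal signature $g(e_1,e_1)=-g(e_2,e_3)=1$ when raising and lowering indices, but this only rearranges which linear combinations appear and does not change the structure of the argument.

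First I would split on whether $\alpha=0$ or $\gamma=0$, which is forced by the constraint $\alpha\gamma=0$. If $\gamma=0$, then \eqref{eq:IV.3-Cotton tensor} gives $C=0$ identically and the soliton is trivial, so there is nothing to prove. Hence assume $\gamma\neq 0$, so $\alpha=0$; then $\alpha+\delta\neq 0$ forces $\delta\neq 0$. The equation $\gamma x_3=0$ gives $x_3=0$, and then the fifth equation $-\delta x_3+\tfrac{\gamma^3}{2}=0$ becomes $\tfrac{\gamma^3}{2}=0$, i.e. $\gamma=0$, contradicting $\gamma\neq 0$. So the only surviving possibility is $\gamma=0$, and the Cotton soliton is trivial.

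The main (and really only) obstacle is bookkeeping: getting the Lie-derivative components and the Cotton-tensor components correct with the non-diagonal pseudo-orthonormal metric of Type IV.3, and making sure the vanishing off-diagonal Cotton components ($C_{12}$, $C_{13}$, $C_{22}$, and the $C_{11}$-versus-$C_{23}$ relation) are recorded as genuine constraints rather than being silently dropped. Once the system is written down correctly, the case analysis collapses almost immediately because $C$ depends only on $\gamma$ (and $\beta\gamma^2$), so killing $\gamma$ kills $C$. I would present the short derivation of the displayed system, then the two-line case split on $\alpha\gamma=0$, concluding that $\gamma=0$ in all cases and hence the soliton is trivial; if desired one can add the remark, parallel to the statement preceding the theorem, that in the $\gamma=0$ subcase $X$ automatically satisfies $\mathcal{L}_X g=\lambda g$ with the Cotton term absent, so it coincides with a (trivial) Yamabe-type soliton.
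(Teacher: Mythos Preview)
Your approach is exactly the paper's, but the displayed system contains genuine bookkeeping mistakes that break the key step. With the pseudo-orthonormal metric $g_{11}=1$, $g_{23}=g_{32}=-1$ (all other $g_{ij}=0$), the soliton equation $\mathcal{L}_X g+C=\lambda g$ reads, component by component,
\[
\begin{array}{ll}
(11):\;\; 2\alpha x_3+\gamma^3=\lambda, & (12):\;\; \gamma x_3=0,\\[0.05in]
(13):\;\; \alpha x_1+\gamma x_2+\beta x_3=0, & (22):\;\; 0=0,\\[0.05in]
(23):\;\; -\delta x_3+\tfrac{\gamma^3}{2}=-\lambda, & (33):\;\; 2(\beta x_1+\delta x_2)+\tfrac{\beta\gamma^2}{2}=0.
\end{array}
\]
Your equations $\alpha x_3=0$ and $\beta x_1+\delta x_2=0$ do not occur; your line~2 mixes the $(1,3)$ Lie-derivative component with $C_{11}$; and, most importantly, your ``fifth equation'' $-\delta x_3+\tfrac{\gamma^3}{2}=0$ is missing the $-\lambda$ on the right (since $g_{23}=-1$, not $0$). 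Consequently, in the case $\alpha=0$, $\gamma\neq 0$, setting $x_3=0$ in the correct $(23)$-equation only gives $\lambda=-\tfrac{\gamma^3}{2}$, which is \emph{not} a contradiction by itself.

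The contradiction actually requires two equations: from $(11)$ with $\alpha=0$ (or $x_3=0$) one gets $\lambda=\gamma^3$, and from $(23)$ with $x_3=0$ one gets $\gamma^3+2\lambda=0$; together these force $3\gamma^3=0$, hence $\gamma=0$. This is precisely what the paper does. So your outline is right, and the very ``bookkeeping'' you flagged as the only obstacle is exactly where the argument currently fails; once you correct the system as above, the two-line case split you describe goes through.
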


\begin{proof}
Using Equations (\ref{eq:IV.3-Lie metric}) and (\ref{eq:IV.3-Cotton tensor}) it follows that
a left-invariant vector field $\VE=(\ve_1,\ve_2,\ve_3)$ is a Cotton soliton if and only if
\begin{equation}\label{Soliton IV.3}
\left\{
\begin{array}{l}
    \gamma  \ve_3=0,
    \\[0.075in]
    \alpha \ve_1+\gamma  \ve_2+\beta  \ve_3=0,
    \\[0.075in]
    2 \alpha  \ve_3+\gamma ^3- \lambda =0,
    \\[0.075in]
    \gamma ^3 -2\delta  \ve_3 + 2\lambda=0,
    \\[0.075in]
    \beta  \gamma ^2 +4 \beta  \ve_1 + 4 \delta  \ve_2=0.
\end{array}
\right.
\end{equation}

\noindent Recall that $\alpha\gamma=0$ and $\alpha+\delta\neq 0$.  First, if $\alpha=\gamma=0$ then
the Lie group is   flat and hence the left-invariant Cotton soliton is trivial.  Now, if $\alpha=0$
and $\gamma\neq 0$ then the first equation in (\ref{Soliton IV.3}) implies that $\ve_3=0$, and
therefore from the third and fourth equations in (\ref{Soliton IV.3}) we easily obtain that
$\gamma=0$, which is a contradiction. Finally, if $\gamma=0$ and $\alpha\neq 0$ then  the Lie group
is locally conformally flat and therefore the left-invariant Cotton soliton is trivial.
\end{proof}

\begin{remark}\rm
In the Riemannian setting, the unimodular Lie groups correspond to Type Ia (just considering the
usual cross product induced by the quaternions), while the non-unimodular case corresponds to Type
$\algebraNomizu$ and Type IV.2 previously discussed \cite{Milnor}.

With regard to Types Ia and IV.2, the  behaviour  is exactly the same in both the Riemannian and the
Lorentzian cases, since Theorems \ref{theor: Type Ia} and \ref{theor:Type IV.2} remain true in the
Riemannian setting. Thus, since locally symmetric spaces and Lie groups of Type $\algebraNomizu$
are locally conformally flat, one has that three-dimensional homogeneous Riemannian manifolds do
not admit any non-trivial left-invariant Cotton soliton. Hence, any left-invariant Riemannian
Cotton soliton is a Yamabe soliton, and in Riemannian signature this implies the flatness of the
manifold.

Let $H_3$ be the Heisenberg group and consider on $H_3$  the left-invariant metric given by $g=
dx^2+dy^2+(dz-x dy)^2$.  Proceeding as in Section \ref{sect:non-invariant Lorentzian} it is
obtained that  $(H_3,g)$ is a shrinking non-invariant Riemannian Cotton soliton which is not
trivial.
\end{remark}

\bigskip
\noindent\emph{Proof of Theorem \ref{th:cotton-nilpotente}.}  A careful examination of the  cases obtained in Theorem \ref{theor soliton II} and in Theorem \ref{theor soliton III} shows that the
corresponding Cotton operator is two-step nilpotent for unimodular Lorentzian Lie groups of Type II
with $\alpha=\beta\neq 0$ or $\alpha=0\neq \beta$, while the degree of nilpotency is three for
unimodular Lorentzian Lie groups of Type III with $\alpha\neq 0$. Conversely, a case by case
examination of the Cotton operator in the different unimodular and non-unimodular Lorentzian Lie
groups shows, after a long but straightforward calculation, that the  cases with nilpotent Cotton
operator are precisely those obtained in   Theorems \ref{theor soliton II} and  \ref{theor soliton III}.$\hfill\square$

\section{Algebraic Lorentzian Cotton solitons}\label{sect:algebraic Cotton}

Cotton solitons are fixed points of the Cotton flow up to diffeomorphisms and rescaling. The specific  behaviour of a homogenous manifold allows   us to consider a stronger condition than the Cotton soliton one. More precisely, we can consider soliton solutions for the Cotton flow up to automorphisms instead of diffeomorphisms; in such a case the soliton is referred to as an algebraic Cotton soliton. Let $(G,g)$ be a simply connected Lie group equipped with a left-invariant Lorentzian metric $g$ and let $\mathfrak{g}$ denote the Lie algebra of the Lie group $G$. Following the seminal work of Lauret \cite{Lauret}, $(G,g)$ is said to be  an \emph{algebraic Cotton soliton} if it satisfies
\begin{equation}\label{eq:algebraic cotton soliton}
\widehat{C}=\lambda\operatorname{Id}+D
\end{equation}
where $\widehat{C}$ stands for the Cotton operator ($g(\widehat{C}(X),Y)=C(X,Y)$), $\lambda$ is a real constant and $D\in\operatorname{Der}(\mathfrak{g})$, i.e.,
\[
D[X,Y]=[DX,Y]+[X,DY]\quad \text{for all} \, X, Y \in\mathfrak{g}.
\]

Next we show that the  algebraic Cotton soliton condition is stronger than the Cotton soliton one.

\begin{proposition}
Let $(G,g)$ be a simply connected Lie group endowed with a left-invariant Lorentzian metric $g$. If $(G,g)$ is an algebraic Cotton soliton, i.e. it satisfies Equation (\ref{eq:algebraic cotton soliton}), then it is a Cotton soliton such that a vector field solving Equation (\ref{Cotton equation}) is given by
\[
X=\dfrac{d}{dt}|_{t=0}\varphi_t(p),\quad with \quad d\varphi_t|_e=\operatorname{exp}\left(\dfrac{t}{2}D\right),
\]
where $e$ denotes the identity element of $G$.
\end{proposition}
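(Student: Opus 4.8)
The plan is to follow the standard Lauret-type argument that translates an algebraic soliton condition into a geometric one, adapted here to the Cotton tensor rather than the Ricci tensor. First I would consider the one-parameter family of Lie algebra automorphisms $\psi_t=\exp(\tfrac{t}{2}D)\in\operatorname{Aut}(\mathfrak{g})$, which is well defined precisely because $D$ is a derivation. Since $G$ is simply connected, each $\psi_t$ integrates to a unique Lie group automorphism $\varphi_t:G\to G$ with $d\varphi_t|_e=\psi_t$, and $\{\varphi_t\}$ is a one-parameter group of diffeomorphisms; let $X$ be its infinitesimal generator, $X=\tfrac{d}{dt}|_{t=0}\varphi_t(p)$. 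The key point to establish is the formula $\mathcal{L}_Xg = -\tfrac{d}{dt}|_{t=0}\varphi_t^{*}g$, together with an identification of $\tfrac{d}{dt}|_{t=0}\varphi_t^{*}g$ in terms of $D$ acting on the metric; evaluating at the identity (and using left-invariance to extend everywhere) gives
\[
(\mathcal{L}_Xg)(Y,Z) = g(DY,Z)+g(Y,DZ)
\]
for left-invariant $Y,Z$, i.e. $\mathcal{L}_Xg$ is the symmetrization of $D$ measured by $g$.

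Next I would bring in the Cotton operator. The hypothesis $\widehat{C}=\lambda\operatorname{Id}+D$ means $D=\widehat{C}-\lambda\operatorname{Id}$; since $\widehat{C}$ is self-adjoint with respect to $g$ (the Cotton tensor $C$ being symmetric) and the identity is obviously self-adjoint, the symmetric part of $D$ relative to $g$ is exactly $\widehat{C}-\lambda\operatorname{Id}$ itself. Hence, using the displayed formula for $\mathcal{L}_Xg$,
\[
(\mathcal{L}_Xg)(Y,Z)=2\,g\bigl((\widehat{C}-\lambda\operatorname{Id})Y,Z\bigr)=2\,C(Y,Z)-2\lambda\,g(Y,Z),
\]
which upon rescaling the derivation (this is why the factor $\tfrac{t}{2}$ appears in the exponent, so that the $2$'s are absorbed) yields $\mathcal{L}_Xg+C=\lambda g$, i.e. Equation \eqref{Cotton equation}. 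Thus $(G,g)$ is a Cotton soliton with the asserted vector field, after adjusting constants so that the normalization matches \eqref{Cotton equation} exactly.

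I expect the main obstacle to be the first half: carefully verifying the sign and the factor in $\mathcal{L}_Xg = g(D\cdot,\cdot)+g(\cdot,D\cdot)$. One must be precise about whether $\varphi_t$ acts on the left or the right, about the direction of the flow versus the pullback (the Lie derivative is $\frac{d}{dt}|_0\varphi_t^*g$ with the appropriate sign convention), and about the fact that $\varphi_t^*g$ is again left-invariant so it suffices to compute at $e$, where $d\varphi_t|_e=\psi_t$ acts on $\mathfrak{g}$ and $\frac{d}{dt}|_0\psi_t=\tfrac12 D$. Everything else — self-adjointness of $\widehat{C}$, symmetry of $C$, and the algebra relating $D$ to $\widehat{C}-\lambda\operatorname{Id}$ — is routine. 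I would present the computation at the level of left-invariant vector fields and simply remark that left-invariance propagates the identity to all of $G$.
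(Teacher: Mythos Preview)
Your approach is essentially identical to the paper's: integrate the one-parameter group of Lie algebra automorphisms $\exp(\tfrac{t}{2}D)$ to group automorphisms $\varphi_t$, compute $(\mathcal{L}_Xg)$ at the identity as the symmetrization of the derivative of $\psi_t$, and then substitute $D=\widehat{C}-\lambda\operatorname{Id}$. One small slip to clean up: with $d\varphi_t|_e=\exp(\tfrac{t}{2}D)$ already in place you get $(\mathcal{L}_Xg)(Y,Z)=\tfrac{1}{2}\bigl(g(DY,Z)+g(Y,DZ)\bigr)$ directly---there is no residual factor of $2$ to absorb---and this is precisely the formula the paper writes before substituting $\widehat{C}=\lambda\operatorname{Id}+D$.
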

\begin{proof}
Let $(G,g)$ be an algebraic Cotton soliton, i.e.  $\widehat{C}=\lambda\operatorname{Id}+D$, let  $\{e_i\}$ denote  a pseudo-orthonormal basis and define a one-parameter family of automorphisms $\varphi_t$  by setting  $d\varphi_t{|_e}=\operatorname{exp}\left(\dfrac{t}{2}D\right)$. Considering  the vector field $X$ given by $X=\dfrac{d}{dt}|_{t=0}\varphi_t(p)$ a direct calculation  shows that the Lie derivative in the direction of  $X$ is given by
\[
\left(\mathcal{L}_X g\right)(e_i,e_j)=\dfrac{d}{dt}|_{t=0}\varphi^*_t g(e_i,e_j)=\dfrac{1}{2}\left(g(De_i,e_j)+g(e_i,De_j)\right),
\]
for  $i,j\in\{1,2,3\}$. Then,
\[
\begin{array}{rcl}
C(e_i,e_j) & = & \frac{1}{2}\left(g(\widehat{C}(e_i),e_j)+g(e_i,\widehat{C}(e_j))\right)\\
\noalign{\bigskip}
\phantom{C(e_i,e_j)} & = & \frac{1}{2}\left(g((\lambda\operatorname{Id}+D)(e_i),e_j)+g(e_i,(\lambda\operatorname{Id}+D)(e_j))\right)\\
\noalign{\bigskip}
\phantom{C(e_i,e_j)} & = & \lambda\, g(e_i,e_j)+\left(\mathcal{L}_X g\right)(e_i,e_j),
\end{array}
\]
which shows that $(G,g,X)$ is a Cotton soliton
\end{proof}

In view of the  proposition above it seems natural to ask whether a Cotton soliton on a Lie group comes from an algebraic Cotton soliton or not. The remaining of this  section is devoted to clarify this question. In this sense, the next theorem shows how algebraic Cotton solitons allow constructing non-invariant Cotton solitons.

\begin{theorem}\label{th:algebraic Cotton solitons}
A three-dimensional Lie group $(G,g)$  equipped with a left-invariant Lorentzian metric is an  algebraic Cotton soliton if and only if one of the following conditions holds:
\begin{itemize}
\item[(i)] $(G,g)$ is of Type Ia with $\alpha=\beta=0$ and $\gamma\neq 0$, or any cyclic permutation. In this case,  $\lambda = -2 \gamma^3$ and $G=H_3$ is  the Heisenberg group.
\item[(ii)] $(G,g)$ is of Type Ib with $\alpha=0$ and $\gamma=\dfrac{\varepsilon \sqrt{2}\beta}{2}$. In this case, $\lambda=2\varepsilon  \sqrt{2} \beta^3$ with $\varepsilon^2=1$, and  $G=E(1,1)$.
\end{itemize}
\end{theorem}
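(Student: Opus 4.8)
The plan is to carry out a case-by-case analysis over the classification of three-dimensional Lorentzian Lie algebras recalled in Section~\ref{se:preliminaries} (Types Ia, Ib, II, III in the unimodular case, and Types IV.1, IV.2, IV.3, together with $\algebraNomizu$, in the non-unimodular case), solving Equation~\eqref{eq:algebraic cotton soliton} $\widehat{C}=\lambda\operatorname{Id}+D$ in each family. In each case the Cotton operator $\widehat{C}$ is already essentially known: its components follow from the $(0,2)$-Cotton tensors displayed in Section~\ref{sect:left invariant} (Equations~\eqref{eq:Ia-Cotton tensor}, \eqref{eq:Ib-Cotton tensor}, \eqref{eq:II-Cotton tensor}, \eqref{eq:III-Cotton tensor}, \eqref{eq:IV.1-Cotton tensor}, \eqref{eq:IV.2-Cotton tensor}, \eqref{eq:IV.3-Cotton tensor}) by raising an index with the metric of the prescribed signature. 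So the first step is, for each type, to write $\widehat{C}$ explicitly as a matrix in the fixed basis $\{e_1,e_2,e_3\}$, and then to write $D=\widehat{C}-\lambda\operatorname{Id}$ and impose the derivation condition $D[e_i,e_j]=[De_i,e_j]+[e_i,De_j]$ against the structure constants of that type.

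The second step is the linear-algebra core: the derivation constraint $D\in\operatorname{Der}(\mathfrak{g})$ is a system of linear equations in the entries of $D$ whose coefficients are the structure constants, so once $D$ is pinned down to $\widehat{C}-\lambda\operatorname{Id}$ the constraint becomes a system of polynomial equations in the structure parameters $(\alpha,\beta,\gamma,\delta)$ and in $\lambda$. For each type I would solve this system. For Types Ia, Ib, II, III the computation is governed by Equations~\eqref{eq:Ia-Cotton tensor}--\eqref{eq:III-Cotton tensor}; for instance in Type Ia, $\widehat{C}$ is diagonal, $D=\operatorname{diag}(C_{11}-\lambda, C_{22}-\lambda, -C_{33}-\lambda)$ (minding the timelike slot $e_3$), and imposing that this diagonal map be a derivation of $(\mathfrak{g}_{Ia})$ forces, on each bracket $[e_i,e_j]=c\,e_k$, the relation $(d_i+d_j-d_k)c=0$; chasing these down together with the explicit values of the $C_{ii}$ and the exclusion of locally conformally flat cases via Lemma~\ref{le: Lie groups with W=0} should isolate exactly $\alpha=\beta=0$, $\gamma\neq0$ (and cyclic permutations), yielding $G=H_3$ and, from $C_{33}=-\frac12(2\gamma^3)$ hence $\lambda=-2\gamma^3$, the stated value. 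An analogous but slightly heavier computation in Type Ib, using the off-diagonal term $C_{23}$ in~\eqref{eq:Ib-Cotton tensor}, should force $\beta\neq0$, $\alpha^2+4\alpha\gamma+4\beta^2-8\gamma^2=0$ together with the further derivation constraints collapsing to $\alpha=0$ and $\gamma=\tfrac{\varepsilon\sqrt2}{2}\beta$, giving $G=E(1,1)$ and $\lambda=2\varepsilon\sqrt2\,\beta^3$.

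The third step is to dispose of the remaining types: Type $\algebraNomizu$ is of constant curvature, hence $\widehat{C}=0$, so $D=-\lambda\operatorname{Id}$, and since the only scalar multiple of the identity that is a derivation of a non-abelian Lie algebra is zero (and an abelian $\algebraNomizu$ is flat), one gets only the trivial $\lambda=0$ solution, which is not a genuine example; similarly for the other locally conformally flat strata flagged by Lemma~\ref{le: Lie groups with W=0}. For Types II, III and IV.1--IV.3 one must check that no parameter values satisfy both the derivation equations and the non-flatness requirement except those already covered, i.e.\ that these types produce no new algebraic Cotton solitons. Here the previously proven Theorems~\ref{theor soliton II} and~\ref{theor soliton III} are a useful sanity check but not a shortcut, since the algebraic condition is strictly stronger; in fact one expects the Type II and Type III left-invariant Cotton solitons found earlier to fail the algebraic condition because the relevant $\widehat{C}-\lambda\operatorname{Id}$ is not a derivation.

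The main obstacle I anticipate is bookkeeping rather than conceptual: the non-unimodular Types IV.1, IV.2, IV.3 carry four parameters subject to the side relations $\alpha\gamma-\beta\delta=0$, $\alpha\gamma+\beta\delta=0$, $\alpha\gamma=0$ respectively (plus $\alpha+\delta\neq0$), and the Cotton operators there are genuinely non-diagonal, so the derivation system is a moderately large polynomial system whose consistent elimination — showing it has no solutions outside the locally conformally flat locus — is the long-but-straightforward computation. I would organize it by first using the side relation to eliminate one parameter, then reading off the constraints bracket by bracket, and finally invoking Lemma~\ref{le: Lie groups with W=0} to discard the conformally flat solutions, concluding that only (i) and (ii) survive.
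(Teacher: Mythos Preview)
Your proposal is correct and follows essentially the same approach as the paper: a case-by-case analysis over the Lorentzian Lie algebra types, writing $D=\widehat{C}-\lambda\operatorname{Id}$ from the Cotton tensors already computed in Section~\ref{sect:left invariant}, imposing the derivation condition against the structure constants, and discarding the locally conformally flat solutions via Lemma~\ref{le: Lie groups with W=0}. The paper carries out Types~Ia and~Ib in detail (arriving at exactly the polynomial constraints you anticipate, including $\alpha\lambda=0$ in Type~Ib, with the $\lambda=0$, $\alpha\neq0$ branch eliminated as conformally flat) and then omits the remaining types as routine, just as you describe.
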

\begin{proof}
We analyze the existence of algebraic Cotton solitons on a three-dimensional Lorentzian Lie algebra. To do that,  it is sufficient to study when an operator $D$ of the form
\[D=\widehat{C}-\lambda\operatorname{Id}\]
is a derivation on a Lie algebra $\mathfrak{g}$.

We start with  Type Ia. From Equation (\ref{eq:Ia-Cotton tensor}) it is easy to get that  $D=\widehat{C}-\lambda\operatorname{Id}$ is a derivation if and only if
\begin{equation}\label{eq: algebraic cotton Ia}
\left\{
\begin{array}{lll}
\gamma  \left(\alpha ^3-\alpha ^2 \beta -\alpha  \left(\beta ^2-\gamma ^2\right)+\beta ^3+\beta  \gamma ^2-2 \gamma ^3-\lambda \right)&=&0,\\
\noalign{\medskip}
\beta  \left(\alpha ^3-\alpha ^2 \gamma -\alpha  \left(\gamma ^2-\beta ^2\right)-2 \beta ^3+\beta ^2 \gamma +\gamma ^3-\lambda \right)&=&0,\\
\noalign{\medskip}
\alpha  \left(2 \alpha ^3-\alpha ^2 (\beta +\gamma )-\beta ^3+\beta ^2 \gamma +\beta  \gamma ^2-\gamma ^3+\lambda \right)&=&0.
\end{array}
\right.
\end{equation}
If all the  structure constants vanish then the Lie algebra is abelian and hence the algebraic Cotton soliton is trivial. Hence, we  assume that at least one of them is non-zero, for instance $\gamma\neq 0$. Thus, from the first equation in (\ref{eq: algebraic cotton Ia}) we obtain that $\lambda=\gamma ^2 (\alpha +\beta )+(\alpha -\beta )^2 (\alpha +\beta )-2 \gamma ^3$. Then, Equation (\ref{eq: algebraic cotton Ia}) becomes
\begin{equation}\label{eq: algebraic cotton Ia reduced}
\left\{
\begin{array}{lll}
\beta  (\beta -\gamma ) \left(\alpha ^2+2 \alpha  (\beta +\gamma )-3 \beta ^2-2
   \beta  \gamma -3 \gamma ^2\right)&=&0,\\
\noalign{\medskip}
\alpha  (\alpha -\gamma ) \left(3 \alpha ^2+2 \alpha  (\gamma -\beta )-(\beta
   -\gamma ) (\beta +3 \gamma )\right)&=&0.
\end{array}
\right.
\end{equation}
A first non-trivial solution is obtained taking $\alpha=\beta=0$ and $\lambda=-2\gamma^3$. For $\alpha\neq 0$ and $\beta=0$ a straightforward calculation  shows that $\alpha=\gamma$, and in this case the manifold is locally conformally flat. The same conclusion is obtained for $\alpha=0$ and $\beta\neq 0$. Finally, assume $\alpha\neq 0\neq \beta$. If $\alpha=\gamma$ or $\beta=\gamma$ or $\alpha=\beta$ Equation (\ref{eq: algebraic cotton Ia reduced}) implies that $\alpha=\beta=\gamma$ and therefore the sectional curvature is constant; thus the manifold is locally conformally flat. If $\alpha\neq\beta\neq\gamma\neq\alpha$ the Equation (\ref{eq: algebraic cotton Ia reduced}) reduces to 
\begin{equation}\label{eq: algebraic cotton Ia very reduced}
\left\{
\begin{array}{lll}
\alpha ^2+2 \alpha  (\beta +\gamma )-3 \beta ^2-2
   \beta  \gamma -3 \gamma ^2&=&0,\\
\noalign{\medskip}
3 \alpha ^2+2 \alpha  (\gamma -\beta )-(\beta
   -\gamma ) (\beta +3 \gamma )&=&0.
\end{array}
\right.
\end{equation}
From this last equation we get $(\alpha -\beta )^2+3 \gamma ^2=0$, which  is a contradiction. Proceeding in an analogous way for $\alpha\neq 0$ or $\beta\neq 0$ (i) is obtained. \\

We consider now the Type Ib. From Equation (\ref{eq:Ib-Cotton tensor}), and taking into account that $\beta\neq0$, it follows that  $D=\widehat{C}-\lambda\operatorname{Id}$ is a derivation if and only if
\begin{equation}\label{eq: algebraic cotton Ib}
\left\{
\begin{array}{rll}
\alpha ^3+4 \alpha  \gamma ^2+8 \beta ^2 \gamma -8 \gamma ^3-\lambda&=&0,\\
\noalign{\medskip}
\alpha ^2+4 \alpha  \gamma +4 \beta ^2-8 \gamma ^2&=&0,\\
\noalign{\medskip}
\alpha  \lambda&=&0.
\end{array}
\right.
\end{equation}
The last equation in (\ref{eq: algebraic cotton Ib}) implies that either  $\alpha=0$ or $\lambda=0$. First, suppose that  $\alpha=0$. Thus, Equation  (\ref{eq: algebraic cotton Ib}) becomes
\begin{equation}\label{eq: algebraic cotton Ib alpha zero}
\left\{
\begin{array}{rll}
8 \beta ^2 \gamma -8 \gamma ^3-\lambda&=&0,\\
\noalign{\medskip}
\beta ^2-2 \gamma ^2&=&0,\\
\end{array}
\right.
\end{equation}
from where (ii) is obtained. We assume now that $\lambda=0$ and $\alpha\neq 0$. Then, Equation (\ref{eq: algebraic cotton Ib}) becomes
\begin{equation}\label{eq: algebraic cotton Ib lambda zero}
\left\{
\begin{array}{rll}
8 \gamma ^3-\alpha ^3-4 \alpha  \gamma ^2-8 \beta ^2 \gamma &=&0,\\
\noalign{\medskip}
\alpha ^2+4 \alpha  \gamma +4 \beta ^2-8 \gamma ^2&=&0.\\
\end{array}
\right.
\end{equation}
A long but straightforward calculation  shows that
$\alpha=-2\gamma$ and $\beta=\pm\sqrt{3} \gamma$, and in this case the  manifold is locally conformally flat.

The remaining types of three-dimensional Lorentzian Lie algebras do  not provide non-trivial algebraic Cotton solitons. The proof is obtained by proceeding as in the previous cases. We omit the details for sake of brevity since the calculations are standard.
\end{proof}

\begin{remark}\rm
Case (i) in Theorem \ref{th:algebraic Cotton solitons} is the Lie algebra associated to the Heisenberg group, while case (ii) corresponds to the Lie algebra associated to the solvable Lie group $E(1,1)$. An analogous treatment can be done in the Riemannian case to show that  only the Riemmanian Heisenberg group admits algebraic Cotton solitons. In this sense, in \cite{CL-GR-VL} the authors obtained the Cotton soliton associated to the Riemannian algebraic Cotton soliton on the Heisenberg group.
\end{remark}

\begin{remark}
\rm Theorem \ref{th:algebraic Cotton solitons} highlights that  left-invariant Cotton solitons do not come from algebraic Cotton solitons.
\end{remark}

\section{Non-invariant Lorentzian Cotton solitons}\label{sect:non-invariant Lorentzian}
In this section, motivated by the existence of algebraic Cotton solitons, we show \emph{the existence
of non-invariant Cotton solitons on three-dimensional homogeneous Lorentzian manifolds which are
non-trivial}. To do this, we consider the algebraic Cotton solitons obtained in Theorem \ref{th:algebraic Cotton solitons}. Note that  Theorem \ref{th:cotton-nilpotente} implies  that any Lie group admitting a left-invariant Cotton soliton has necessarily  nilpotent Cotton operator and this condition does not hold for the Lie groups obtained in Theorem \ref{th:algebraic Cotton solitons}. Thus, the Lie groups admitting algebraic Cotton solitons provide non-invariant examples of Cotton solitons.

 \subsection{Non-invariant Cotton solitons on the Heisenberg group}
We start with the Lorentzian Heisenberg group. In
\cite{Rahmani-Rahmani06} it is shown that this Lie group  can be endowed with three different
left-invariant Lorentzian metrics, up to isometry and scaling, given by
\[
\begin{array}{l}
    g_1=-dx^2+dy^2+(x dy +dz)^2,
    \\[0.05in]
    g_2=dx^2+dy^2-(x dy +dz)^2,
    \\[0.05in]
    g_3=dx^2+(x dy+dz)^2-((1-x)dy-dz)^2.
\end{array}
\]
Metric  $g_3$ is flat \cite{nomizu} and hence a trivial Cotton soliton, and  metrics $g_1$ and
$g_2$ are shrinking non-gradient Ricci solitons \cite{Onda10,Onda}. Next we analyze metrics $g_1$
and $g_2$ by separate. Starting with the metric $g_1$, for a vector field
$X=\mathcal{A}(x,y,z)\partial_x+\mathcal{B}(x,y,z)\partial_y+\mathcal{C}(x,y,z)\partial_z$ the Lie derivative  of the metric is
given by
\begin{equation}\label{eq:Lie-g1}
\begin{array}{l}
    (\mathcal{L}_X g_1)(\partial_x,\partial_x) = -2 \mathcal{A}_x,
    \\[0.05in]
    (\mathcal{L}_X g_1)(\partial_x,\partial_y) = -\mathcal{A}_y + \left(x^2+1\right) \mathcal{B}_x+x \mathcal{C}_x,
    \\[0.05in]
    (\mathcal{L}_X g_1)(\partial_x,\partial_z) = -\mathcal{A}_z+x \mathcal{B}_x+\mathcal{C}_x,
    \\[0.05in]
    (\mathcal{L}_X g_1)(\partial_y,\partial_y) =  2 \left(x \mathcal{A} + \left(x^2+1\right) \mathcal{B}_y+x \mathcal{C}_y\right),
    \\[0.05in]
    (\mathcal{L}_X g_1)(\partial_y,\partial_z) =  \mathcal{A}+x \mathcal{B}_y + \left(x^2+1\right) \mathcal{B}_z+\mathcal{C}_y+x  \mathcal{C}_z,
    \\[0.05in]
    (\mathcal{L}_X g_1)(\partial_z,\partial_z) =  2 \left(x \mathcal{B}_z+\mathcal{C}_z\right),
\end{array}
\end{equation}
and the $(0,2)$-Cotton tensor is determined by
\begin{equation}\label{eq:Cotton-g1}
\begin{array}{l}
    C(\partial_x,\partial_x)= -\frac{1}{2},
    \,\,
    C(\partial_y,\partial_y)= \frac{1}{2}-x^2,
    \,\,
    C(\partial_y,\partial_z)=  -x,
    \,\,
    C(\partial_z,\partial_z)= -1.
\end{array}
\end{equation}
Now, Equations (\ref{eq:Lie-g1}) and (\ref{eq:Cotton-g1}) imply that $X$ is a Cotton soliton if and
only if
\begin{equation}\label{eq:soliton-g1}
\left\{
\begin{array}{l}
    4 \mathcal{A}_x + 1 - 2\lambda  = 0,
    \\[0.05in]
    2  x \mathcal{B}_z  +  2 \mathcal{C}_z - 1 - \lambda = 0,
    \\[0.05in]
    \mathcal{A}_z - x\mathcal{B}_x - \mathcal{C}_x = 0,
    \\[0.05in]
    \mathcal{A}_y - (x^2+1) \mathcal{B}_x - x\mathcal{C}_x = 0,
    \\[0.05in]
    \mathcal{A}+x \mathcal{B}_y + \left(x^2+1\right) \mathcal{B}_z+\mathcal{C}_y+x  \mathcal{C}_z - x - x\lambda = 0,
    \\[0.05in]
    4x\mathcal{A} + 4(x^2+1) \mathcal{B}_y + 4x\mathcal{C}_y - 2x^2 +  1  - 2 (x^2+1)\lambda  = 0.
\end{array}
\right.
\end{equation}
We start integrating    first and second  equations in (\ref{eq:soliton-g1}) to get
\[
\begin{array}{l}
    \mathcal{A}(x,y,z) = \mathcal{A}_1(y,z) + \frac{1}{4}(2\lambda-1) x,
    \\[0.05in]
    \mathcal{C}(x,y,z) = \mathcal{C}_1(x,y) -  \mathcal{B}(x,y,z) x + \frac{1}{2}  (\lambda+1) z.
\end{array}
\]
Now, the third equation in (\ref{eq:soliton-g1}) transforms into
\[
    \mathcal{B}(x,y,z) = (\mathcal{C}_1)_x(x,y) - (\mathcal{A}_1)_z(y,z)
\]
and hence differentiating  the fourth equation in (\ref{eq:soliton-g1}) with respect  to $x$ and
$z$ we obtain $(\mathcal{A}_1)_{zz} = 0$, and thus
\[
    \mathcal{A}_1(y,z) = \mathcal{A}_2(y) z  + \mathcal{A}_3(y).
\]
Next, differentiating the fifth equation in (\ref{eq:soliton-g1}) with respect to $z$ we get
$\mathcal{A}_2(y)=0$ and the fourth equation in (\ref{eq:soliton-g1}) transforms into $(\mathcal{C}_1)_{xx}(x,y) -
\mathcal{A}_3'(y)=0$, from where it follows that
\[
\begin{array}{l}
    \mathcal{C}_1(x,y) = \frac{1}{2} \mathcal{A}_3'(y) x^2 + \mathcal{C}_2(y) x + \mathcal{C}_3(y).
\end{array}
\]
At this point, the fifth equation in (\ref{eq:soliton-g1}) reduces to
\[
    4\mathcal{A}_3(y) + 4\mathcal{C}_3'(y)+  (2 \mathcal{A}_3''(y) x + 4 \mathcal{C}_2'(y) - 3) x = 0
\]
and from this equation a straightforward calculation shows that
\[
\begin{array}{l}
    \mathcal{A}_3(y) = \kappa_1 y+\kappa_2,
    \quad
    \mathcal{C}_2(y) = \frac{3}{4} y+\kappa_3,
    \quad
    \mathcal{C}_3(y) = -\frac{\kappa_1}{2} y^2-\kappa_2 y+\kappa_4,
\end{array}
\]
where $\kappa_i\in\mathbb{R}$, $i=1,\dots,4$. Thus, Equation (\ref{eq:soliton-g1}) finally reduces
to $\lambda-2=0$ and  we   conclude that   $(H_3,g_1)$ is a Cotton soliton if and only if it is
shrinking with $\lambda=2$ and $X$ given by
\[
\begin{array}{l}
    X=(\kappa_1 y+\frac{3}{4} x+\kappa_2)\partial_x
        +( \kappa_1 x+\frac{3 }{4} y+\kappa_3)\partial_y
    -(\frac{\kappa_1}{2}(x^2+y^2)+\kappa_2y-\frac{3}{2} z-\kappa_4)\partial_z.
\end{array}
\]
Note that as an special case of the Cotton solitons obtained above, one has that $X=\frac{3}{4} x
\partial_x
        + \frac{3 }{4} y \partial_y + \frac{3}{2} z\partial_z$ defines a complete Cotton soliton on
        $(H_3,g_1)$.

Now, it is easy to see that the associated Cotton operator, $\widehat{C}$, is characterized by
\[
\begin{array}{l}
    \widehat{C}(\partial_x) =\frac{1}{2}\partial_x,
    \quad
    \widehat{C}(\partial_y) =\frac{1}{2}\partial_y - \frac{3}{2} x \partial_z,
    \quad
    \widehat{C}(\partial_z) =-\partial_z.
\end{array}
\]
As a consequence,  it diagonalizes with eigenvalues $\{-1,\frac{1}{2},\frac{1}{2}\}$ and hence the
Cotton soliton cannot be left-invariant (see Theorem \ref{th:cotton-nilpotente}). Moreover, the
Ricci operator, $\widehat{\rho}$, is given by
\[
\begin{array}{l}
    \widehat{\rho}(\partial_x) =\frac{1}{2}\partial_x,
    \quad
    \widehat{\rho}(\partial_y) =\frac{1}{2}\partial_y -   x \partial_z,
    \quad
    \widehat{\rho}(\partial_z) =-\frac{1}{2} \partial_z,
\end{array}
\]
and thus it does not have any  zero eigenvalue, which implies that  the  soliton is not gradient
\cite{CL-GR-VL}.

\begin{remark}\rm
A completely analogous study can be developed with the metric $g_2$ to obtain that $(H_3, g_2)$ is
a Cotton soliton if and only if it is expanding with $\lambda=-2$ and the soliton given by
\[
\begin{array}{l}
    X=(\kappa_1 y-\frac{3}{4} x +\kappa_2)\partial_x
        -( \kappa_1 x+\frac{3}{4}y -\kappa_3)\partial_y
    +(\frac{\kappa_1}{2}(x^2- y^2)-\kappa_2y-\frac{3}{2}z+\kappa_4)\partial_z,
\end{array}
\]
where $\kappa_i\in\mathbb{R}$, $i=1,\dots,4$.  Again the Cotton soliton is not left-invariant and
it is not  gradient.
\end{remark}

\subsection{Non-invariant Cotton solitons on $E(1,1)$}
The Lie group $E(1,1)$ admits two kinds of algebraic Cotton solitons (up to some sign $\varepsilon=\pm 1$, see Theorem \ref{th:algebraic Cotton solitons}) and therefore it  can be equipped with a  left-invariant metric with non-nilpotent Cotton operator admitting some Cotton soliton. Moreover, these Cotton solitons will be necessarily  non-invariant (see  Theorem \ref{th:cotton-nilpotente}).

For our purpose we consider the frame \cite{Onda}
\[
E_1=\partial_x,\quad E_2=\dfrac{1}{2}\left(e^x\partial_y+e^{-x}\partial_z\right),\quad E_3=\dfrac{1}{2}\left(e^x\partial_y-e^{-x}\partial_z\right)
\]
and the associated coframe given by
\[
E^1=dx,\quad E^2=e^{-x}dy+e^xdz,\quad E^3=e^{-x}dy-e^xdz.
\]
Henceforth  we consider left-invariant metrics corresponding to  Theorem \ref{th:algebraic Cotton solitons}-(ii). First, we take the metric $g_1$ given by
\[g_1=\frac{2}{3}(E^1)^2-(E^2)^2+2 \sqrt{6} (E^2E^3)-3 (E^3)^2,
\]
or in local coordinates
\[
g_1=\dfrac{2}{3}dx^2  + (2 \sqrt{6}-4)e^{-2 x} dy^2  -
 (2 \sqrt{6}+4) e^{2 x} dz^2 + 4 dy dz.
\]
Now, for a  vector field $X=\mathcal{A}(x,y,z)\partial_x+\mathcal{B}(x,y,z)\partial_y+\mathcal{C}(x,y,z)\partial_z$ the Lie derivative of the metric is given by 
\begin{equation}\label{eq:E11-g1}
\begin{array}{l}
    (\mathcal{L}_X g_1)(\partial_x,\partial_x) = \frac{4}{3} \mathcal{A}_x,
    \\[0.05in]
    (\mathcal{L}_X g_1)(\partial_x,\partial_y) =\frac{2}{3} \left(\mathcal{A}_y+3 \left(\sqrt{6}-2\right)
   e^{-2 x}
   \mathcal{B}_x+ 3 \mathcal{C}_x\right),
    \\[0.05in]
    (\mathcal{L}_X g_1)(\partial_x,\partial_z) = \frac{2}{3} \left(\mathcal{A}_z+3 \mathcal{B}_x-3
   \left(\sqrt{6} + 2\right) e^{2 x} \mathcal{C}_x\right),
    \\[0.05in]
    (\mathcal{L}_X g_1)(\partial_y,\partial_y) =  4 \mathcal{C}_y-4 \left(\sqrt{6}-2\right) e^{-2 x}
   \left(\mathcal{A}-\mathcal{B}_y\right),
    \\[0.05in]
    (\mathcal{L}_X g_1)(\partial_y,\partial_z) =  2 \left(\left(\sqrt{6}-2\right) e^{-2 x}
   \mathcal{B}_z+\mathcal{B}_y+\mathcal{C}_z-\left(\sqrt{6}+2\right) e^{2 x} \mathcal{C}_y\right),
    \\[0.05in]
    (\mathcal{L}_X g_1)(\partial_z,\partial_z) =  4 \mathcal{B}_z-4 \left(\sqrt{6}+2\right) e^{2 x}
   \left(\mathcal{A}+\mathcal{C}_z\right)
\end{array}
\end{equation}
and  a straightforward calculation shows that $X=\frac{3 y}{\sqrt{2}}\partial_y+\frac{3 z}{\sqrt{2}}\partial_z$ is a shrinking Cotton soliton with $\lambda=2\sqrt{2}$. Moreover, due to the non-existence of homothetic vector fields on $E(1,1)$ it follows that  this is the unique shrinking Cotton soliton up to Killing vector fields \cite{Calvino-Seoane-Vazquez-Vazquez}.

\medskip

Next we consider the Lorentzian metric $g_2$ on $E(1,1)$  given by
\[g_2=\frac{2}{3}(E^1)^2-3(E^2)^2+2 \sqrt{6} (E^2E^3)- (E^3)^2,
\]
or in local coordinates
\[
g_2=\dfrac{2}{3}dx^2 + ( 2 \sqrt{6} - 4) e^{-2 x} dy^2
- ( 2 \sqrt{6}+4) e^{2 x} dz^2  - 4 dy dz.
\]
Proceeding as in the previous case we get that  $X=-\frac{3y}{\sqrt{2}}\partial_y-\frac{3z}{\sqrt{2}}\partial_z$ is a expanding Cotton soliton for $\lambda=-2\sqrt{2}$. Furthermore, it is the unique expanding Cotton soliton up to Killing vector fields due to the results in \cite{Calvino-Seoane-Vazquez-Vazquez}.

\begin{remark}\rm
As a final remark, it is worth pointing out once again  that the non-invariant Cotton solitons constructed in this section come from algebraic Cotton solitons obtained in Theorem \ref{th:algebraic Cotton solitons}. Moreover, let us emphasize that algebraic Cotton solitons have an opposite behaviour with respect to invariant Cotton solitons on Lie groups since no  left-invariant Cotton soliton can be obtained from an algebraic one.
\end{remark}


\begin{thebibliography}{99}
\bibitem{Aliev-Nutku96} A. N. Aliev and Y. Nutku, A theorem on topologically massive gravity,
    \emph{Classical Quantum Gravity} \textbf{13} (1996), L29--L32.

\bibitem{Ali-Ozgur08}
A. U. O. Kisisel, O. Sarioglu and B. Tekin, Cotton flow, \emph{Classical Quantum Gravity} \textbf{25} (2008), 165019.


\bibitem{BJM} D. Bini, R.T. Jantzen and G. Miniutti, The Cotton, Simon-Mars and Cotton-York
    tensors
    in stationary spacetimes, \emph{Classical Quantum Gravity} \textbf{18} (2001), 4969--4981.

\bibitem{Brozos-Calvaruso-Garcia-Gavino}
     M. Brozos-V\'{a}zquez, G. Calvaruso, E. Garc\'{\i}a-R\'{\i}o and S. Gavino-Fern\'{a}ndez,
    Three-dimensional Lorentzian homogeneous Ricci solitons, \emph{Israel J. Math}
    \textbf{188} (2012), 385--403.

\bibitem{Calvaruso07} G. Calvaruso, Einstein-like metrics on three-dimensional homogeneous
    Lorentzian manifolds, \emph{Geom. Dedicata} {\bf 127} (2007), 99--119.

\bibitem{Calvaruso07b} G. Calvaruso, Homogeneous structures on three-dimensional Lorentzian
    manifolds, \emph{J. Geom. Phys.} \textbf{57}  (2007), 1279--1291.

\bibitem{CL-GR-VL} E. Calvi\~{n}o-Louzao, E. Garc\'{\i}a-R\'{\i}o and R. V\'{a}zquez-Lorenzo, A note on compact
    Cotton
    solitons, \emph{Classical Quantum Gravity} \textbf{29} (2012), 205014 (5pp).
    
\bibitem{Calvino-Seoane-Vazquez-Vazquez}
    E. Calvi\~{n}o-Louzao, J. Seoane-Bascoy, M.E. V\'{a}zquez-Abal and R. V\'{a}zquez-Lorenzo, Three-dimensional homogeneous Lorentzian Yamabe solitons, \emph{Abh. Math. Semin. Univ. Hambg.} \textbf{82} (2012), 193--203.



\bibitem{Chow-Pope-Sezgin10} D. D. K. Chow, C. N. Pope and E. Sezgin,
    Classification of solutions
    in topologically massive gravity, \emph{Classical Quantum Gravity} \textbf{27} (2010), 105001.

\bibitem{Cordero-Parker97} L.A. Cordero and Ph. Parker, Left-invariant Lorentzian metrics on
    $3$-dimensional Lie groups, \emph{Rend. Mat. VII} \textbf{17} (1997), 129--155.

\bibitem{Ferreiro10}
R. Ferreiro-P\'{e}rez,
    Conserved current for the Cotton tensor, black hole entropy
    and equivariant Pontryagin forms, \emph{Classical Quantum Gravity} \textbf{27} (2010), 135015.

\bibitem{Garcia-Hehl-Heinicke04} A. A. Garc\'{\i}a, F. W. Hehl,  C. Heinicke and A. Mac\'{\i}as, The Cotton
    tensor in Riemannian spacetimes, \emph{Classical Quantum Gravity} \textbf{21} (2004),
    1099--1118.

\bibitem{Guven07}
J. Guven,  Chern-Simons theory and three-dimensional surfaces,
 \emph{Classical Quantum Gravity} \textbf{24} (2007),  1833--1840.

\bibitem{Hall-CQG} G.S. Hall,
    The global extension of local symmetries in general relativity,
    \emph{Classical Quantum Gravity} \textbf{6} (1989), 157--161.


\bibitem{Hall-Capocci99} G.S. Hall and M.S. Capocci, Classification and conformal symmetry in
    three-dimensional space-times, \emph{J. Math. Phys.} \textbf{40} (1999), 1466--1478.

\bibitem{K} J.A. Valiente Kroon, Asymptotic expansions of the Cotton-York tensor on slices of
    stationary spacetimes, \emph{Classical Quantum Gravity} \textbf{21} (2004), 3237--3249.

\bibitem{Lashkari-Maloney10} N. Lashkari and A. Maloney, Topologically massive gravity and
    Ricci-Cotton flow, \emph{Classical Quantum Gravity} \textbf{28} (2011), 105007.

\bibitem{Lauret} J. Lauret, Ricci soliton homogeneous nilmanifolds, \emph{Math. Ann.} \textbf{319} (2007), 627--666.

\bibitem{Milnor} J. Milnor, Curvature of left invariant metrics on Lie groups, \emph{Adv. Math.}
    \textbf{21} (1976), 293--329.

\bibitem{nomizu} K. Nomizu, Left-invariant Lorentz metrics on Lie groups, \emph{Osaka J. Math.}
    \textbf{16} (1979), 143--150.


\bibitem{Onda10} K. Onda, Lorentz Ricci solitons on $3$-dimensional Lie groups, \emph{Geom.
    Dedicata}      \textbf{147} (2010), 313--322.


\bibitem{Onda} K. Onda, Examples of algebraic Ricci solitons in the Pseudo-Riemannian case,
    \emph{preprint}, arXiv:1112.0424v3.


\bibitem{Rahmani92} S. Rahmani, M\'{e}triques de Lorentz sur les groupes de Lie unimodulaires de
    dimension trois, \emph{J. Geom. Phys.} \textbf{9} (1992), 295--302.

\bibitem{Rahmani-Rahmani06} N. Rahmani and S. Rahmani,  Lorentzian geometry of Heisenberg group,
    \emph{Geom. Dedicata} \textbf{118} (2006), 133--140.


\bibitem{SFR} F.C. Sousa, J.B. Fonseca and C. Romero, Equivalence of three-dimensional spacetimes,
    \emph{Classical Quantum Gravity} \textbf{25} (2008), 035007.


\end{thebibliography}
\end{document}